\theoremstyle{plain}
\newtheorem{theorem}{\bf Theorem}[section]
\newtheorem{proposition}[theorem]{\bf Proposition}
\newtheorem{lemma}[theorem]{\bf Lemma}
\theoremstyle{definition}
\newtheorem{remarks}[theorem]{\bf Remarks}
\newcommand{\N}{\mathbb N}
\newcommand{\Z}{\mathbb Z}
 \DeclareMathOperator{\ord}{ord}
 \DeclareMathOperator{\supp}{supp}
\renewcommand{\time}{\negthinspace \times \negthinspace}
\renewcommand{\t}{\, | \,}
\numberwithin{equation}{section}
\begin{document}

\title{The set of minimal distances in Krull monoids}

\address{University of Graz, NAWI Graz \\
Institute for Mathematics and Scientific Computing \\
Heinrichstra{\ss}e 36\\
8010 Graz, Austria}

\email{alfred.geroldinger@uni-graz.at, qinghai.zhong@uni-graz.at}

\author{Alfred Geroldinger  and Qinghai Zhong}

\thanks{This work was supported by
the Austrian Science Fund FWF, Project Number M1641-N26.}

\keywords{non-unique factorizations, sets of distances, Krull monoids, zero-sum sequences, cross numbers}

\subjclass[2010]{11B30, 11R27, 13A05,  20M13}

\begin{abstract}
Let $H$ be a Krull monoid with finite class group $G$. Then every non-unit $a \in H$ can be written as a finite product of atoms, say $a=u_1 \cdot \ldots \cdot u_k$. The set $\mathsf L (a)$ of all possible factorization lengths $k$ is called the set of lengths of $a$. If $G$ is finite, then there is a constant $M \in \N$ such that all sets of lengths are almost arithmetical multiprogressions with bound $M$ and with difference $d \in \Delta^* (H)$, where $\Delta^* (H)$ denotes the set of minimal distances of $H$. We show that $\max \Delta^* (H) \le \max \{\exp (G)-2, \mathsf r (G)-1\}$  and that equality holds if every class of $G$ contains a prime divisor, which holds true for holomorphy rings in global fields.
\end{abstract}

\maketitle

\bigskip
\section{Introduction}
\bigskip

Let $H$ be a Krull monoid with class group $G$ (we have in mind holomorphy rings in global fields and give more examples later). Then every non-unit of $H$ has a factorization as a finite product of atoms (or irreducible elements), and all these factorizations are unique (i.e., $H$ is factorial) if and only if $G$ is trivial. Otherwise, there are elements having factorizations which differ not only up to associates and up to the order of the factors. These phenomena are described by arithmetical invariants such as sets of lengths and sets of distances. We first recall some concepts and then we formulate a main result of the present paper.

For a finite nonempty set $L = \{m_1, \ldots, m_k\}$ of positive integers with $m_1 < \ldots < m_k$, we denote by $\Delta (L) = \{m_i-m_{i-1} \mid i \in [2,k] \}$ the set of distances of $L$. Thus $\Delta (L)=\emptyset$ if and only if $|L|\le 1$. If a non-unit $a \in H$ has a factorization $a = u_1 \cdot \ldots \cdot u_k$ into atoms $u_1, \ldots, u_k$, then $k$ is called the length of the factorization, and the set $\mathsf L_H (a) = \mathsf L (a)$ of all possible $k$ is called the set of lengths of $a$. If there is an element $a \in H$ with $|\mathsf L (a)|>1$, then it immediately follows  that  $|\mathsf L (a^n)| > n$ for every $n \in \N$. Since $H$ is  Krull, every non-unit has a factorization into atoms and all sets of lengths are finite. The set of distances $\Delta (H)$ is the union of all sets $\Delta (\mathsf L (a))$ over all non-units $a \in H$. Thus, by definition, $\Delta (H)=\emptyset$ if and only if $|\mathsf L (a)|=1$ for all non-units $a\in H$, and $\Delta (H)=\{d\}$ if and only if $\mathsf L (a)$ is an arithmetical progression with difference $d$ for all non-units $a \in H$.
The set of minimal distances $\Delta^* (H)$ is defined as
\[
\Delta^* (H) = \{ \min \Delta (S) \mid S \subset H \ \text{is a divisor-closed submonoid with} \ \Delta (S) \ne \emptyset \} \,.
\]
By definition, we have $\Delta^* (H) \subset \Delta (H)$, and $\Delta^* (H) = \emptyset$ if and only if $\Delta (H) = \emptyset$. If the class group $G$ is finite, then $\Delta (H)$ is finite and sets of lengths have a well-defined structure which is given in the next theorem (\cite[Chapter 4.7]{Ge-HK06a}).

\medskip
\noindent
{\bf Theorem A.} {\it Let $H$ be a Krull monoid with finite class group. Then there is  a constant $M \in \N$ such that the set of lengths $\mathsf L (a)$ of any non-unit $a\in H$ is an {\rm AAMP} $($almost arithmetical multiprogression$)$ with difference $d \in \Delta^* (H)$ and bound $M$.}

\medskip
The structural description given above is best possible (\cite{Sc09a}).
The set of minimal distances $\Delta^* (H)$ has been studied by Chapman, Geroldinger, Halter-Koch, Hamidoune, Plagne, Smith, Schmid, and others and there are a variety of results. We refer the reader to the monograph \cite[Chapter 6.8]{Ge-HK06a} for an overview and mention some results which have appeared since then. Suppose that $G$ is finite and that every class contains a prime divisor.
Then the set of distances $\Delta (H)$ is an interval (\cite{Ge-Yu12b}). A simple example shows that the interval $[1, \mathsf r (G)-1]$ is contained in $\Delta^* (H)$ (Lemma \ref{3.1}) and thus, by Theorem \ref{1.1} below,  $\Delta^* (H)$ is an interval too if $\mathsf r (G) \ge \exp (G)-1$. Cyclic groups are in sharp contrast to this. Indeed, if $G$ is cyclic with $|G|>3$, then $\max \big( \Delta^* (H) \setminus \{|G|-2\}\big) = \lfloor \frac{|G|}{2} \rfloor -1$ (\cite{Ge-Ha02}).
A detailed study of the structure of $\Delta^* (H)$ in case of cyclic groups is given in a recent paper by Plagne and Schmid \cite{Pl-Sc16a}.

The goal of the present paper is to study the maximum of $\Delta^* (H)$, and here is the main direct result.

\medskip
\begin{theorem} \label{1.1}
Let $H$ be a Krull monoid with class group $G$.
\begin{enumerate}
\item If \ $|G| \le 2$, then $\Delta^* (H) = \emptyset$.

\smallskip
\item If \ $2 < |G| < \infty$, then $\max \Delta^* (H) \le \max \{\exp (G)-2, \mathsf r (G) -1\}$ where $\mathsf r (G)$ denotes the rank of $G$.

\smallskip
\item Suppose that every class contains a prime divisor. If  $G$ is infinite, then $\Delta^* (H) = \mathbb N$. \newline If \ $2 < |G| < \infty$, then $\max \Delta^* (H) = \max \{\exp (G)-2, \mathsf r (G)-1\}$.
\end{enumerate}
\end{theorem}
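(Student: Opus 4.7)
The plan is to transfer from $H$ to the block monoid $\mathcal B(G)$ of zero-sum sequences and then argue combinatorially. The standard block homomorphism $\beta\colon H\to\mathcal B(G_P)$, where $G_P\subset G$ denotes the set of classes containing prime divisors, is a transfer homomorphism and hence preserves sets of lengths; in particular $\Delta^*(H)\subset\Delta^*(\mathcal B(G))$, with equality when $G_P=G$ (the hypothesis of part~(3)). Since every divisor-closed submonoid of $\mathcal B(G)$ is of the form $\mathcal B(G_0)$ for some $G_0\subset G$, the problem becomes purely combinatorial.

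Part~(1) is immediate: if $|G|\le 2$ then all atoms of $\mathcal B(G)$ have length at most $2$, so $\mathcal B(G)$ is half-factorial and $\Delta(H)=\emptyset$.

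For part~(2), set $M:=\max\{\exp(G)-2,\mathsf r(G)-1\}$; one must show that for every $G_0\subset G$ with $\Delta(\mathcal B(G_0))\ne\emptyset$ there exists $B\in\mathcal B(G_0)$ and $d\in\Delta(\mathsf L(B))$ with $d\le M$. The workhorse construction is the \emph{symmetric pair}: if $G_0$ contains both $g$ and $-g$ with $n:=\ord(g)\ge 3$, then $B=g^n(-g)^n$ is divisible only by the atoms $g^n$, $(-g)^n$, and $g(-g)$, and an immediate counting argument gives $\mathsf L(B)=\{2,n\}$, hence distance $n-2\le\exp(G)-2$. The main obstacle will be subsets $G_0$ containing no symmetric pair with $\ord\ge 3$: here one replaces $\{g,-g\}$ by an independent tuple witnessing the rank of $G_0$, forms a minimal zero-sum sequence using these independent elements together with their sum, and extracts from a carefully chosen power a length set of distance at most $\mathsf r(G_0)-1\le\mathsf r(G)-1$, following the mechanism behind Lemma~\ref{3.1}.

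Part~(3) combines the upper bound from~(2) with two explicit realizations. Under the assumption that every class contains a prime divisor, Lemma~\ref{3.1} gives $\mathsf r(G)-1\in\Delta^*(H)$, and if $\exp(G)\ge 3$ then choosing $g\in G$ of maximal order and applying the symmetric-pair construction inside the divisor-closed submonoid $\mathcal B(\{g,-g\})$ places $\exp(G)-2$ in $\Delta^*(H)$; together these match the upper bound. For the infinite case, $G$ contains finite subsets of arbitrarily large exponent or rank (or one uses infinite-order elements in the torsion-free case), and applying the same two constructions to such subsets realizes every positive integer as an element of $\Delta^*(H)$.
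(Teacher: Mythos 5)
Your reduction to $\mathcal B(G_P)$ via the block homomorphism, part (1), and the two realization constructions (the pair $\{g,-g\}$ giving $\ord(g)-2$, and an independent tuple together with its sum giving $\mathsf r(G)-1$) are all correct and coincide with Lemma \ref{2.1} and Lemma \ref{3.1}. The genuine gap is the upper bound in part (2), which is the actual content of the theorem. Your dichotomy ``$G_0$ contains a symmetric pair of order $\ge 3$ / it does not'' is not the right one, and the proposed fallback fails: an arbitrary non-half-factorial $G_0$ without a symmetric pair need not contain an independent tuple together with its sum (those elements need not lie in $G_0$ at all), and its minimal distance is in general \emph{not} bounded by $\mathsf r(G)-1$. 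Already for cyclic $G$ (rank $1$) there are such subsets with $\min\Delta(G_0)$ as large as $\lfloor |G|/2\rfloor-1$, so for these sets one must produce a distance bounded by $\exp(G)-2$, and the mechanism behind Lemma \ref{3.1}.2 cannot deliver that. The paper's argument splits instead along cross numbers: if some atom $U$ over $G_0$ has $\mathsf k(U)<1$, Lemma \ref{3.1}.3(a) yields a distance $\le \exp(G)-2$; if $G_0$ is an LCN-set, one reduces to minimal non-half-factorial (hence indecomposable) sets, applies the transfer of Lemma \ref{3.4}.3, invokes Schmid's Proposition \ref{3.2} (that $\mathsf m(G)=\mathsf r(G)-1$ for $p$-groups) when all orders in $G_0$ are prime powers, and otherwise needs the new Lemmas \ref{3.5} and \ref{3.6}, a substantial combinatorial analysis of generating properties of $G_0$ and of factorizations of specific products of atoms. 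Nothing in your sketch replaces this machinery, so the heart of part (2) is missing.

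The infinite case of part (3) is also not covered by ``the same two constructions.'' For the Pr\"ufer group $\Z(2^{\infty})$ the symmetric pair only realizes the values $2^k-2$ and the rank is $1$, and for torsion-free $G$ the set $\{g,-g\}$ is half-factorial (its only atom is $g(-g)$); realizing \emph{every} $d\in\N$ requires different subsets and is exactly the theorem of Chapman, Schmid and Smith that the paper cites at this point. If you intend to rely on that result, cite it; as written, the claim is unsupported and false for the constructions you name.
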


\smallskip
Theorem \ref{1.1} will be complemented by an associated inverse result (Theorem \ref{4.5})  describing how $\max \Delta^* (H)$ is realized and disproving a former conjecture (Remark \ref{4.6}). Both the direct as well as the inverse result have number theoretic relevance beyond the occurrence in Theorem A. Indeed, they are key tools in the characterization of those Krull monoids whose systems of sets of lengths are  closed under set addition (\cite{Ge-Sc15b}),  in the study of arithmetical characterizations of class groups via sets of lengths (\cite[Chapter 7.3]{Ge-HK06a}, \cite{Sc09c, Ge-Sc16a}), as well as  in the asymptotic study of counting functions associated to  periods of sets of lengths (\cite{Sc08d} and \cite[Theorem 9.4.10]{Ge-HK06a}).

In Section \ref{2} we gather the required background from the theory of Krull monoids and from Additive Combinatorics. In particular, we outline that the set of minimal distances of $H$ equals the set of minimal distances of an associated monoid of zero-sum sequences (Lemma \ref{2.1}) and that  therefore it can be studied with methods from Additive Combinatorics. The proof of Theorem \ref{1.1} will be given in Section \ref{3} and the associated inverse result will be given in Section \ref{4}.

\medskip
\section{Background on Krull monoids and on Additive Combinatorics} \label{2}
\medskip

We denote by $\N$ the set of positive integers, and, for $a, b \in \Z$, we denote by $[a,b]=\{x \in \Z \mid a \le x \le b\}$ the discrete, finite interval between $a$ and $b$. We use the convention that $\max \emptyset = 0$. By a {\it monoid}, we mean a commutative semigroup with identity that satisfies the cancellation laws. If $H$ is a monoid, then $H^{\times}$ denotes the unit group, $\mathsf q (H)$ the quotient group, and $\mathcal A (H)$  the set of atoms (or irreducible elements) of $H$. A submonoid $S \subset H$ is called {\it divisor-closed} if $a \in S$, $b \in H$, and $b$ divides $a$ imply that $b \in S$.
A monoid $H$ is said to be
\begin{itemize}
\item {\it atomic} if every non-unit can be written as a finite product of atoms.

\item {\it factorial} if it is atomic and every atom is prime.

\item {\it half-factorial} if it is atomic and $|\mathsf L (a)|=1$ for each non-unit $a \in H$ (equivalently, $\Delta (H)=\emptyset$).

\item {\it decomposable} if there exist submonoids $H_1, H_2$ with $H_i \not\subset H^{\times}$ for $i \in [1,2]$ such that $H = H_1 \times H_2$ (and $H$ is called {\it indecomposable} else).
\end{itemize}
A monoid $F$ is factorial with $F^{\times} = \{1\}$ if and only if it is free abelian. If this holds, then the set of primes $P \subset F$ is a basis of $F$, we write $F = \mathcal F (P)$, and every $a \in F$ has a representation of the form
\[
a = \prod_{p \in P} p^{\mathsf v_p (a)} \quad \text{with} \ \mathsf v_p (a) \in \N_0 \quad \text{and} \quad \mathsf v_p (a) = 0 \ \text{for almost all} \ p \in P \,.
\]

A monoid homomorphism \ $\theta \colon H \to B$ is called a \ {\it
transfer homomorphism} \ if it has the following properties:
\begin{enumerate}
\item[]
\begin{enumerate}
\item[{\bf (T\,1)\,}] $B = \theta(H) B^\times$ \ and \ $\theta
^{-1} (B^\times) = H^\times$.

\smallskip

\item[{\bf (T\,2)\,}] If $u \in H$, \ $b,\,c \in B$ \ and \ $\theta
(u) = bc$, then there exist \ $v,\,w \in H$ \ such that \ $u = vw$, \
$\theta (v) \simeq b$ \ and \ $\theta (w) \simeq c$.
\end{enumerate}
\end{enumerate}
If $H$ and $B$ are atomic monoids and $\theta \colon H \to B$ is a transfer homomorphism, then  (see \cite[Chapter 3.2]{Ge-HK06a})
\[
\mathsf L_H (a) = \mathsf L_B ( \theta (a) ) \ \text{ for all $a \in H$}, \quad \Delta (H) = \Delta (B), \quad \text{and} \quad
\Delta^* (H) = \Delta^* (B) \,.
\]

\medskip
\noindent
{\bf Krull monoids.} A monoid $H$ is said to be a {\it Krull monoid} if it satisfies the following two conditions:
\begin{enumerate}
\item[(a)] There exists a monoid homomorphism $\varphi \colon H \to F = \mathcal F (P)$ into a free abelian monoid $F$ such that $a \t b$ in $H$ if and only if $\varphi (a) \t \varphi (b)$ in $F$.

\item[(b)] For every $p \in P$, there exists a finite subset $E \subset H$ such that $p = \gcd \big( \varphi (E) \big)$.
\end{enumerate}
Let $H$ be a Krull monoid and $\varphi \colon H \to \mathcal F (P)$ a homomorphism satisfying Properties (a) and (b). Then $\varphi$ is called a divisor theory of $H$, $G= \mathsf q (F)/ \mathsf q (\varphi (H))$ is the class group, and $G_P = \{ [p] = p \mathsf q (\varphi (H))) \mid p \in P\} \subset G$ the set of classes containing prime divisors. The class group will be written additively, and the tuple $(G, G_P)$ are uniquely determined by $H$. To provide some examples of Krull monoids, we recall that a domain is a Krull domain if and only if its multiplicative monoid of nonzero elements is a Krull monoid, and that a noetherian domain is Krull if and only if it is integrally closed. Rings of integers, holomorphy rings in algebraic function fields, and regular congruence monoids in these domains are Krull monoids with finite class group such that every class contains a prime divisor (\cite{Ge-HK04a}, \cite[Chapter 2.11]{Ge-HK06a}). For monoids of modules and monoid domains which are Krull we refer to \cite{Ki-Pa01, Ch11a, Ba-Wi13a, Ba-Ge14b}.

Next we introduce Krull monoids having a combinatorial flavor which are used to model arbitrary Krull monoids. Let $G$ be an additively written abelian group and $G_0 \subset G$ a subset. An element $S = g_1 \cdot \ldots \cdot g_l \in \mathcal F (G_0)$ is called a {\it sequence} over $G_0$, $\sigma (S) = g_1+ \ldots+g_l$ is called its sum,  $|S|=l$ its length, and
$\mathsf h(S)=\max \{\mathsf v_g(S)\mid g\in \supp(S)\}$ the maximal multiplicity of $S$.
The monoid
\[
\mathcal B (G_0) = \{S \in \mathcal F (G_0) \mid \sigma (S)=0 \}
\]
is a Krull monoid, called the {\it monoid of zero-sum sequences} over $G_0$.  Its significance for the study of general Krull monoids is summarized in the following lemma (see \cite[Theorem 3.4.10 and Proposition 4.3.13]{Ge-HK06a}).

\medskip
\begin{lemma} \label{2.1}
Let $H$ be a Krull monoid, $\varphi \colon H \to D = \mathcal F (P)$ a divisor theory with class group $G$ and $G_P \subset G$ the set of classes containing prime divisors. Let $\widetilde{ \boldsymbol \beta} \colon D \to \mathcal F (G_P)$ denote the unique homomorphism defined by $\widetilde{ \boldsymbol \beta} (p) = [p]$ for all $p \in P$. Then the homomorphism $\boldsymbol \beta = \widetilde{ \boldsymbol \beta} \circ \varphi \colon H \to \mathcal B (G_P)$ is a transfer homomorphism. In particular, we have
\[
\Delta^* (H) = \Delta^* \big( \mathcal B (G_P) \big) = \big\{ \min \Delta \big( \mathcal B (G_0) \big) \mid G_0 \subset G_P \ \text{is a subset such that} \ \mathcal B (G_0) \ \text{is not half-factorial} \big\} \,.
\]
\end{lemma}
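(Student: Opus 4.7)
The plan is to verify that $\boldsymbol\beta$ is a transfer homomorphism and then read off the equalities from the general fact that transfer homomorphisms preserve sets of lengths, together with an identification of the divisor-closed submonoids of $\mathcal B(G_P)$.

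First I would check that $\boldsymbol\beta$ is well-defined into $\mathcal B(G_P)$: for every $a \in H$, $\varphi(a) \in \varphi(H) \subset \mathsf q(\varphi(H))$, so the image $\widetilde{\boldsymbol\beta}(\varphi(a))$ has sum equal to the class of $\varphi(a)$ in $G = \mathsf q(D)/\mathsf q(\varphi(H))$, which is zero. To verify property (T1), note that $\mathcal B(G_P)^{\times} = \{1\}$, while $H^{\times} = \varphi^{-1}(D^{\times})$ maps to the empty product; surjectivity $\boldsymbol\beta(H) = \mathcal B(G_P)$ follows because every $B = [p_1]\cdot\ldots\cdot[p_\ell] \in \mathcal B(G_P)$ can be lifted to $p_1\cdot\ldots\cdot p_\ell \in D$, which lies in $\varphi(H)$ since its class vanishes and $\varphi$ is a divisor theory. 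The key step is (T2): given $a \in H$ with $\boldsymbol\beta(a) = BC$ in $\mathcal B(G_P)$, write $\varphi(a) = p_1\cdot\ldots\cdot p_\ell$ in $D$ and split this product according to which $[p_i]$ are grouped into $B$ and which into $C$; each partial product lies in $\varphi(H)$ because its image in $G$ is zero, and one uses that $\varphi$ is a divisor theory to pull back this factorization to $H$. This is the one place where I expect real work, since it requires the defining properties of a divisor theory (primes of $D$ appear as greatest common divisors of finite subsets of $\varphi(H)$).

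With $\boldsymbol\beta$ shown to be a transfer homomorphism, the quoted equality $\Delta^*(H) = \Delta^*(\mathcal B(G_P))$ follows from the boxed displayed formula in Section~\ref{2}. For the second equality, I would identify the divisor-closed submonoids of $\mathcal B(G_P)$: if $S \subset \mathcal B(G_P)$ is divisor-closed, put $G_0 = \supp\bigl(\bigcup_{B \in S}\supp(B)\bigr) \cap G_P$; then $S \subset \mathcal B(G_0)$, and conversely every $B \in \mathcal B(G_0)$ divides $B \cdot B'$ for a suitable zero-sum completion of its elements, forcing $S = \mathcal B(G_0)$. Thus divisor-closed submonoids of $\mathcal B(G_P)$ are exactly the monoids $\mathcal B(G_0)$ with $G_0 \subset G_P$, and the condition $\Delta(\mathcal B(G_0)) \neq \emptyset$ is the same as $\mathcal B(G_0)$ being not half-factorial. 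Substituting this identification into the defining formula for $\Delta^*(\mathcal B(G_P))$ yields the desired description.

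The only genuinely delicate step is (T2); everything else is either a direct consequence of the divisor-theoretic definitions or an appeal to the fact that transfer homomorphisms preserve length sets, which has already been recorded in the preceding paragraph on transfer homomorphisms.
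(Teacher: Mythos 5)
Your proposal is essentially correct and is the standard argument that the paper itself does not spell out but delegates to the citation \cite[Theorem 3.4.10 and Proposition 4.3.13]{Ge-HK06a}: verify (T1), (T2) for $\boldsymbol\beta$ by splitting $\varphi(a)$ in $D$ and pulling back, then observe that the divisor-closed submonoids of $\mathcal B(G_P)$ are exactly the monoids $\mathcal B(G_0)$ with $G_0\subset G_P$. Two points should be tightened. First, in (T2) the input you actually need is property (a) of a divisor theory (the divisor-homomorphism property), which gives saturation and hence $\varphi(H)=\{d\in D\mid [d]=0 \text{ in } G\}$; once the partial products $d_1,d_2$ with $d_1d_2=\varphi(a)$ are known to have trivial class they lie in $\varphi(H)$, say $d_i=\varphi(v_i)$, and then $\varphi(v_1v_2)=\varphi(a)$ forces $a\simeq v_1v_2$, which yields (T2). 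Property (b) (primes as gcds of finite subsets of $\varphi(H)$) is not what makes this step work, so your remark about where the ``real work'' lies is misplaced, though harmless. Second, in the identification of divisor-closed submonoids, the inclusion $\mathcal B(G_0)\subset S$ (with $G_0=\bigcup_{B\in S}\supp(B)$) is not justified by ``$B$ divides $B\cdot B'$''; the correct one-line argument is that for $B\in\mathcal B(G_0)$ each $g\in\supp(B)$ occurs in some $B_g\in S$, and then $B$ divides $\bigl(\prod_{g\in\supp(B)}B_g\bigr)^N\in S$ for $N\ge\max_g \mathsf v_g(B)$ (divisibility in $\mathcal B(G_P)$ being componentwise since the quotient automatically has sum zero), whence $B\in S$ by divisor-closedness. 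With these repairs, and with the paper's recorded fact that transfer homomorphisms preserve $\Delta^*$, your proof is complete and coincides with the cited one.
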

Thus $\Delta^* (H)$ can be studied in an associated monoid of zero-sum sequences and can thus be tackled by methods from Additive Combinatorics. Such transfer results to monoids of zero-sum sequences are not restricted to Krull monoids, but they do exist also from certain seminormal weakly Krull monoids and from certain maximal orders in central simple algebras over global fields. We do not outline this here but refer to \cite[Theorem 1.1]{Sm13a}, \cite{Ge-Ka-Re15a}, and \cite[Section 7]{Ba-Sm15}.

\medskip
\noindent
{\bf Zero-Sum Theory} is a vivid subfield of Additive Combinatorics (see the  monograph \cite{Gr13a},  the survey \cite{Ge09a}, and for a sample of recent papers on direct and inverse zero-sum problems with a strong number theoretical flavor see \cite{Gi08b,G-L-P-P-W11a, Ha11a, Ze-Yu11a, Ga-Pe-Zh13a}). We gather together the concepts needed in the sequel.

Let $G$ be a finite abelian group and $G_0 \subset G$ a subset. Then $\langle G_0 \rangle \subset G$ denotes
the subgroup generated by $G_0$.    A family $(e_i)_{i \in I}$ of elements of $G$ is said to be \
{\it independent} \ if $e_i \ne 0$ for all $i \in I$ and, for every
family $(m_i)_{i \in I} \in \Z^{(I)}$,
\[
\sum_{i \in I} m_ie_i =0 \qquad \text{implies} \qquad m_i e_i =0 \quad \text{for all} \quad i \in I\,.
\]
The family $(e_i)_{i \in I}$ is called a {\it basis} for $G$ if
 $G = \bigoplus_{i \in I} \langle e_i \rangle$. The set $G_0$ is said to be independent if the tuple $(g)_{g \in G_0}$ is independent. If for a prime $p \in \mathbb P$, $\mathsf
r_p (G)$ is the $p$-rank of $G$, then
\[
\mathsf r (G) = \max \{ \mathsf r_p (G) \mid p \in \mathbb P\} \ \ \text{is the {\it rank} of $G$  and} \ \
\mathsf r^* (G) = \sum_{p \in \mathbb P} \mathsf r_p (G) \ \text{ is the {\it total rank} of $G$} \,.
\]
The monoid $\mathcal B (G_0)$ of zero-sum sequences over $G_0$ is a finitely generated Krull monoid. It is traditional to set
\[
\mathcal A (G_0) := \mathcal A \big(\mathcal B (G_0)\big), \ \Delta (G_0):= \Delta \big( \mathcal B (G_0)\big) , \ \text{and} \   \Delta^* (G_0) :=  \Delta^* \big( \mathcal B (G_0)\big) \,.
\]
Clearly, the atoms of $\mathcal B (G_0)$ are precisely the minimal zero-sum sequences over $G_0$. The set $\mathcal A (G_0)$ is finite, and $\mathsf D (G_0) = \max \{ |S| \mid S \in \mathcal A (G_0)\}$ is the {\it Davenport constant} of $G_0$.
The set $G_0$ is called
\begin{itemize}
\item {\it half-factorial} \ if  the monoid $\mathcal B (G_0)$ is half-factorial (equivalently, $\Delta (G_0) = \emptyset$).

\item {\it non-half-factorial} \ if the monoid $\mathcal B (G_0)$ is not half-factorial (equivalently, $\Delta (G_0) \ne \emptyset$).

\item {\it minimal non-half-factorial} \ if $\Delta (G_0) \ne \emptyset$ but every proper subset is half-factorial.

\item {\it $($in$)$decomposable} \ if the monoid $\mathcal B (G_0)$ is (in)decomposable.
\end{itemize}
(Maximal) half-factorial and (minimal) non-half-factorial subsets have found a lot of attention in the literature (see \cite{Ge-Go03, Sc05c, Pl-Sc05a, Pl-Sc05b, Sc06a, Ch-Ch-Sm07b,Ch-Go-Pe14a}), and cross numbers are a crucial tool for their study.
For a sequence $S = g_1 \cdot \ldots \cdot g_l \in \mathcal F (G_0)$, we call
\[
\begin{aligned}
\mathsf k (S) & = \sum_{i=1}^l \frac{1}{\ord (g_i)} \ \in \mathbb Q_{\ge 0} \quad \text{the {\it cross number} of $S$, and } \\
\mathsf K (G_0) & = \max \{ \mathsf k (S) \mid S \in \mathcal A (G_0) \} \quad \text{the {\it cross number} of $G_0$}.
\end{aligned}
\]
The following simple result (\cite[Proposition 6.7.3]{Ge-HK06a}) will be used throughout the paper without further mention.

\medskip
\begin{lemma} \label{2.3}
Let $G$ be a finite abelian group and $G_0 \subset G$ a subset. Then
the following statements are equivalent{\rm \,:}
\begin{enumerate}
\item[(a)]
$G_0$ is half-factorial.

\smallskip

\item[(b)]
$\mathsf k (U) = 1$ for every $U \in \mathcal A (G_0)$.

\smallskip

\item[(c)]
$\mathsf L (B) = \{ \mathsf k (B) \}$ for every $B \in
      \mathcal B (G_0)$.
\end{enumerate}
\end{lemma}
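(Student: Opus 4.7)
The proof rests on a single elementary observation: the cross number is additive under multiplication of sequences, i.e., $\mathsf{k}(ST) = \mathsf{k}(S) + \mathsf{k}(T)$ for all $S, T \in \mathcal{F}(G_0)$, which is immediate from the definition as a sum of reciprocal orders. Given this, my plan is to prove the chain of implications (b) $\Rightarrow$ (c) $\Rightarrow$ (a) $\Rightarrow$ (b).

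The first two implications are routine. For (b) $\Rightarrow$ (c), any factorization $B = U_1 \cdot \ldots \cdot U_k$ into atoms of $\mathcal{B}(G_0)$ satisfies $\mathsf{k}(B) = \sum_{i=1}^k \mathsf{k}(U_i) = k$ by hypothesis, so every factorization length is forced to equal $\mathsf{k}(B)$, giving $\mathsf{L}(B) = \{\mathsf{k}(B)\}$. The implication (c) $\Rightarrow$ (a) is immediate, since a singleton set of lengths for every $B$ is precisely half-factoriality.

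The only implication requiring a genuine argument is (a) $\Rightarrow$ (b), and this is where I expect the main (but mild) obstacle to lie. Given an atom $U = g_1^{a_1} \cdot \ldots \cdot g_s^{a_s}$ with pairwise distinct $g_i \in G_0$ and orders $n_i = \ord(g_i)$, my plan is to set $N = \lcm(n_1, \ldots, n_s)$ and exhibit two factorizations of $B = U^N \in \mathcal{B}(G_0)$ into atoms. On the one hand, the power $U^N$ itself yields a factorization of length $N$. On the other hand, rewriting $B = \prod_{i=1}^s g_i^{a_i N}$ and using that $n_i \mid N$, each block $g_i^{a_i N}$ factors as $(g_i^{n_i})^{a_i N / n_i}$; since each $g_i^{n_i}$ is clearly a minimal zero-sum sequence and therefore an atom of $\mathcal{B}(G_0)$, this yields a second factorization of $B$ of length $\sum_{i=1}^s a_i N / n_i = N \mathsf{k}(U)$. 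Half-factoriality forces $N = N \mathsf{k}(U)$, so $\mathsf{k}(U) = 1$, as required.
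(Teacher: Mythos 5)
Your proof is correct, and it is essentially the standard argument: the paper does not prove this lemma itself but cites \cite[Proposition 6.7.3]{Ge-HK06a}, whose proof rests on the same two ingredients you use, namely additivity of the cross number and the comparison of the two factorizations of $U^N$ (the same power trick the paper employs in the proof of Lemma \ref{3.1}.3). Nothing further is needed.
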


\medskip
\section{Direct results on $\Delta^* (H)$} \label{3}
\medskip

We start with a basic well-known lemma (see \cite[Chapter 6.8]{Ge-HK06a}).

\medskip
\begin{lemma} \label{3.1}
Let $G$ be a finite abelian group with $|G| > 2$.
\begin{enumerate}
\item If $g \in G$ with $\ord (g) > 2$, then $\ord (g) - 2 \in
\Delta^* (G)$. In particular, $\exp (G)-2 \in \Delta^* (G)$.

\smallskip
\item If $\mathsf r (G) \ge 2$, then $[1, \mathsf r (G)-1] \subset
\Delta^* (G)$.

\smallskip
\item Let $G_0 \subset G$ a subset.
      \begin{enumerate}
      \item If there exists a $U \in \mathcal A (G_0)$ with $\mathsf k (U) < 1$, then $\min \Delta (G_0) \le \exp (G)-2$.

      \smallskip
      \item If $\mathsf k (U) \ge 1$ for all $U \in \mathcal A (G_0)$, then $\min \Delta (G_0) \le |G_0|-2$.
      \end{enumerate}
\end{enumerate}
\end{lemma}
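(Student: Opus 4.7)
My plan is to argue throughout in the monoid of zero-sum sequences $\mathcal{B}(G_0)$ for suitable subsets $G_0 \subset G$, using for the realizability statements (parts 1 and 2) an explicit choice of $G_0$ with controlled $\min \Delta(G_0)$, and for the inequalities in part 3 a single explicit element $B \in \mathcal{B}(G_0)$ whose set of lengths already produces the claimed bound. I begin with part 1, which is the easiest: setting $G_0 = \{g, -g\}$ with $n = \ord(g) \ge 3$, the only atoms of $\mathcal{B}(G_0)$ are $V = g(-g)$, $U = g^n$, $U' = (-g)^n$, and the element $V^n = UU'$ has precisely the two factorizations, of lengths $n$ and $2$. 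Hence $\min \Delta(G_0) = n - 2$, and specializing $g$ to an element of maximal order yields $\exp(G) - 2 \in \Delta^*(G)$.

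For part 2, given $d \in [1, \mathsf{r}(G) - 1]$ I pick a prime $p$ with $\mathsf{r}_p(G) = \mathsf{r}(G)$, choose independent $e_1, \ldots, e_{d+1} \in G$ of order $p$, set $g = e_1 + \cdots + e_{d+1}$ (also of order $p$), and take $G_0 = \{e_1, \ldots, e_{d+1}, g\}$. A short enumeration using the independence of the $e_i$'s shows $\mathcal{A}(G_0) = \{U_1, \ldots, U_{d+1}, U_g\} \cup \{W^{(s)} : s \in [1, p-1]\}$, where $U_i = e_i^p$, $U_g = g^p$, and $W^{(s)} = e_1^s \cdots e_{d+1}^s g^{p-s}$. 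The upper bound $\min \Delta(G_0) \le d$ is witnessed by $B = W^{(1)} W^{(p-1)} = e_1^p \cdots e_{d+1}^p g^p$, which also factors as $U_1 \cdots U_{d+1} U_g$ of length $d+2$. For the matching lower bound I would compute that for any $B \in \mathcal{B}(G_0)$ and any factorization $U_1^{\alpha_1} \cdots U_g^{\alpha_g} \prod_s (W^{(s)})^{\beta_s}$, the length equals $C_B - (d/p)\mu$ with $\mu = \sum_{s=1}^{p-1} s \beta_s$ and $C_B$ depending only on $B$; the divisibility constraints on the $\alpha$'s force $\mu$ to lie in a single residue class modulo $p$, so $\mathsf{L}(B)$ sits in an arithmetic progression of common difference $d$, giving $\min \Delta(G_0) \ge d$.

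For part 3(a) I let $U = g_1 \cdots g_l \in \mathcal{A}(G_0)$ have $\mathsf{k}(U) < 1$, set $n = \exp(G)$, and look at $B = U^n$: since $\ord(g_i) \mid n$, $B$ admits both $U^n$ (length $n$) and $\prod_i (g_i^{\ord(g_i)})^{n/\ord(g_i)}$ (length $n\mathsf{k}(U)$), and the bound $\mathsf{k}(U) < 1$ excludes $0 \in \supp(U)$, hence $|U| \ge 2$ and $n\mathsf{k}(U) = \sum_i n/\ord(g_i) \ge l \ge 2$, so $\min \Delta(G_0) \le n(1 - \mathsf{k}(U)) \le \exp(G) - 2$. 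For part 3(b) I pick an atom $U \in \mathcal{A}(G_0)$ with $\mathsf{k}(U) > 1$ (which exists because $G_0$ is non-half-factorial while every atom satisfies $\mathsf{k} \ge 1$), put $s = |\supp(U)| \le |G_0|$, and take $B = \prod_{h \in \supp(U)} h^{\ord(h)}$. Since $\mathsf{v}_h(U) < \ord(h)$ for $h \in \supp(U)$ (else $U = h^{\ord(h)}$ would force $\mathsf{k}(U) = 1$), $U$ divides $B$, and $B$ admits both the length-$s$ factorization $\prod_h U_h$ and a factorization of length $l' = 1 + (\text{atoms in } B/U) \le 1 + \mathsf{k}(B/U) = 1 + s - \mathsf{k}(U) < s$, with $l' \ge 2$ because $B/U$ is a non-trivial zero-sum. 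Thus $\min \Delta(G_0) \le s - l' \le s - 2 \le |G_0| - 2$.

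The step I expect to be the main obstacle is the lower bound $\min \Delta(G_0) \ge d$ in part 2: the upper bound is immediate from one explicit element, but ruling out smaller gaps forces me to enumerate all $p-1$ atoms $W^{(s)}$ (not just the two used in the upper bound) and derive the clean length identity $|z| = C_B - (d/p)\mu$, because a priori some other $B \in \mathcal{B}(G_0)$ could realize a shorter gap.
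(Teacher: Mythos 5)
Your proposal is correct and takes essentially the same route as the paper: the same sets $\{g,-g\}$ (part 1) and $\{e_1,\ldots,e_{d+1},\,e_1+\cdots+e_{d+1}\}$ (part 2), the same element $U^n$ in 3(a), and the standard cross-number argument for 3(b); you merely carry out explicitly the computations the paper delegates to \cite[Proposition 6.8.1 and Lemma 6.8.6]{Ge-HK06a}. The only point to make explicit in part 1 is that $\min \Delta(\{g,-g\})$ equals (not just is at most) $\ord(g)-2$, which follows from the same length identity you derive in part 2: every factorization length of $g^a(-g)^b$ equals $\frac{a+b}{n}+z\,\frac{n-2}{n}$ with $z \equiv a \bmod n$, so all sets of lengths lie in arithmetic progressions with difference $n-2$.
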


\begin{proof}
1. Let $g \in G$ with $\ord (g) = n > 2$ and set $G_0 = \{g, -g
\}$. Then $\mathcal A (G_0) = \{g^n, (-g)^n, \big( (-g)g \big) \}$,
$\Delta (G_0) = \{n-2\}$, and hence $\min \Delta (G_0) = n-2$.

\smallskip
2. Let $s \in [2, \mathsf r (G)]$. Then there is a prime $p \in
\mathbb P$ such that $C_p^s$ is isomorphic to a subgroup of $G$, and
it suffices to show that $s-1 \in \Delta^* (C_p^s)$. Let $(e_1,
\ldots, e_s)$ be a basis of $C_p^s$ and set $e_0 = e_1 + \ldots +
e_s$ and $G_0 = \{e_0, \ldots, e_s \}$. Then a simple calculation
(details can be found in \cite[Proposition 6.8.1]{Ge-HK06a}) shows
that $\Delta (G_0) = \{s-1\}$ and hence $\min \Delta (G_0) = s-1$.

\smallskip
3.(a) Let $U = g_1 \cdot \ldots \cdot g_l \in \mathcal A (G_0)$ with
$\mathsf k (U) < 1$ and $n = \exp (G)$ (note that $\mathsf k (U) <
1$ implies $U \ne 0$, $l \ge 2$ and $\mathsf k (U) > \frac{1}{n}$).
Then $U_i = g_i^{\ord (g_i)} \in \mathcal A (G_0)$ for all $i \in
[1, l]$, and
\[
U^n = \prod_{i=1}^l U_i^{n/ \ord (g_i)}
\]
implies that $n \mathsf k (U) = \sum_{i=1}^l \frac{n}{\ord (g_i)}
\in \mathsf L (U^n)$. Since $\mathsf k (U) < 1$, we have $n \mathsf
k (U) \in [2, n-1]$ and $\min \Delta (G_0) \le n - n \mathsf k (U)
\in [1, n-2]$.

3.(b) The proof is similar to that of 3.(a), see \cite[Lemma 6.8.6]{Ge-HK06a} for details.
\end{proof}

\medskip
Lemma \ref{3.1}.3 motivates the following  definitions (see \cite{Sc08d, Sc09c}). A subset $G_0 \subset G$ is called an LCN-set ({\it large cross number set}) if $\mathsf k (U) \ge 1$ for each $U \in \mathcal A (G_0)$ and
\[
\mathsf m (G) = \max \big\{ \min \Delta (G_0) \mid G_0 \subset G \
 \text{is a non-half-factorial LCN-set}
 \big\} \,.
\]
Clearly, if $G$ has a non-half-factorial LCN-set, then $|G|\ge4$.
The following result (due to  Schmid \cite{Sc09c}) is crucial for our approach.

\medskip
\begin{proposition} \label{3.2}
Let $G$ be a finite abelian group with $|G| > 2$. Then
\[
\max \Delta^* (G) = \max \{ \exp (G)-2, \mathsf m (G) \} \
\text{and} \ \mathsf m (G) \le \max \{ \mathsf r^* (G)-1, \mathsf K
(G)-1 \} \,.
\]
If $G$ is a $p$-group, then $\mathsf m (G) = \mathsf r (G)-1$ and
thus $\max \Delta^* (G) = \max \{ \exp (G)-2, \mathsf r (G)-1\}$.
\end{proposition}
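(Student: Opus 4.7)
My plan is to prove the three assertions in sequence.

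\emph{The identity $\max \Delta^*(G) = \max\{\exp(G)-2, \mathsf m(G)\}$.} For the direction $\ge$, Lemma \ref{3.1}.1 gives $\exp(G)-2 \in \Delta^*(G)$, and when $\mathsf m(G)>0$, any non-half-factorial LCN-set $G_0$ realizing $\mathsf m(G) = \min \Delta(G_0)$ shows $\mathsf m(G) \in \Delta^*(G)$. For the reverse direction, fix a non-half-factorial $G_0 \subset G$: if some atom $U \in \mathcal A(G_0)$ has $\mathsf k(U) < 1$, then Lemma \ref{3.1}.3(a) yields $\min \Delta(G_0) \le \exp(G)-2$; otherwise $G_0$ is itself an LCN-set and $\min \Delta(G_0) \le \mathsf m(G)$ by the very definition of $\mathsf m$.

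\emph{The bound $\mathsf m(G) \le \max\{\mathsf r^*(G)-1, \mathsf K(G)-1\}$.} Since $\mathcal B(G_0')$ is a divisor-closed submonoid of $\mathcal B(G_0)$ whenever $G_0' \subset G_0$, the quantity $\min \Delta$ can only grow when passing to a subset, so the maximum defining $\mathsf m(G)$ is attained on some minimal non-half-factorial LCN-set $G_0$ (the LCN property is inherited by subsets). Lemma \ref{3.1}.3(b) then gives $\mathsf m(G) = \min \Delta(G_0) \le |G_0|-2$, so it suffices to show $|G_0|-2 \le \max\{\mathsf r^*(G)-1, \mathsf K(G)-1\}$. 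Since $G_0$ is LCN and non-half-factorial, Lemma \ref{2.3} furnishes an atom $V = g_1 \cdots g_l$ with $\mathsf k(V) > 1$; expanding $V^N$ for $N = \lcm\{\ord(g_i) : i \in [1,l]\}$ as $V^N = \prod_{i=1}^l \bigl( g_i^{\ord(g_i)} \bigr)^{N/\ord(g_i)}$ displays in $\mathsf L(V^N)$ two lengths differing by $N(\mathsf k(V)-1) \le N(\mathsf K(G)-1)$. The main obstacle lies in a dichotomy: either this pair (or a refinement using mixed products of $V$ with atoms of cross number $1$) can be used to witness $\min \Delta(G_0) \le \mathsf K(G)-1$, or else the half-factoriality of every proper subset $G_0 \setminus \{g\}$ must be leveraged to extract an independence-type constraint on the $p$-components of $G_0$ and thereby cap $|G_0|$ at $\mathsf r^*(G)+1$.

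\emph{The $p$-group case.} For a $p$-group $G \cong C_{p^{n_1}} \oplus \cdots \oplus C_{p^{n_r}}$ one has $\mathsf r^*(G) = \mathsf r(G)$, and the formula $\mathsf K(G) = 1/\exp(G) + \sum_{i=1}^r (1 - 1/p^{n_i})$ (valid for $p$-groups) implies $\mathsf K(G) - 1 \le \mathsf r(G) - 1$; thus the bound from the previous step yields $\mathsf m(G) \le \mathsf r(G) - 1$. For the matching lower bound, the set $G_0 = \{e_0, e_1, \ldots, e_s\}$ inside a subgroup $C_p^s \le G$ with $e_0 = e_1 + \ldots + e_s$ (already used in Lemma \ref{3.1}.2) is an LCN-set -- its only atom of cross number exceeding $1$ is $e_0^{p-1} e_1 \cdots e_s$, of cross number $(p+s-1)/p \ge 1$ -- with $\min \Delta(G_0) = s-1 = \mathsf r(G)-1$, so $\mathsf m(G) \ge \mathsf r(G)-1$ and equality holds.
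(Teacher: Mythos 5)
Your first step (the identity $\max \Delta^* (G) = \max \{\exp (G)-2, \mathsf m (G)\}$ via Lemma \ref{3.1}.3 and the definition of $\mathsf m(G)$) and your reduction of $\mathsf m(G)$ to minimal non-half-factorial LCN-sets with the bound $\min \Delta (G_0) \le |G_0|-2$ are fine. The genuine gap is in the second assertion, which is the heart of the proposition (the paper itself does not reprove it but cites \cite{Sc09c}): you never prove $\mathsf m (G) \le \max \{\mathsf r^* (G)-1, \mathsf K (G)-1\}$. You state a dichotomy ("either this pair \ldots can be used to witness $\min \Delta (G_0) \le \mathsf K(G)-1$, or else \ldots an independence-type constraint \ldots caps $|G_0|$ at $\mathsf r^*(G)+1$") but establish neither branch. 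The computation with $V^N$ only exhibits two lengths differing by $N(\mathsf k(V)-1)$, so it shows $\min \Delta (G_0)$ divides a multiple of $\mathsf k(V)-1$ scaled by the huge factor $N$; this is nowhere near the bound $\mathsf K(G)-1$. What is actually needed (and is the content of Schmid's argument) is, on one branch, the result of \cite[Corollary 3.1]{Sc05d} (already used in Lemma \ref{3.6}): if some proper subset $G_2 \subset G_0$ with $|G_2| \le |G_0|-2$ generates $\langle G_0 \rangle$, then $\mathsf k(A) \in \N$ for all atoms and $\min \Delta (G_0)$ divides $\gcd\{\mathsf k(A)-1\}$, whence $\min \Delta (G_0) \le \mathsf K(G)-1$; and on the other branch, the fact that a minimal generating set of a finite abelian group has at most $\mathsf r^*(G)$ elements (\cite[Lemma A.6.2]{Ge-HK06a}), which gives $|G_0| \le \mathsf r^*(G)+1$ and hence $\min \Delta (G_0) \le \mathsf r^*(G)-1$. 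Without supplying (or at least citing) these two inputs, the second assertion is unproven, and with it the upper bound $\mathsf m(G) \le \mathsf r(G)-1$ in the $p$-group case collapses; note also that your $p$-group step additionally invokes the exact formula for $\mathsf K(G)$ of $p$-groups, itself a non-trivial result that should be cited explicitly.

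Two smaller points. In your lower-bound example $G_0 = \{e_0, e_1, \ldots, e_r\} \subset C_p^r$, it is false that $e_0^{p-1}e_1 \cdots e_r$ is the only atom of cross number exceeding $1$: for every $a \in [1, p-1]$ the atom $e_0^{a}(e_1 \cdots e_r)^{p-a}$ has cross number $\frac{a + r(p-a)}{p} > 1$ when $r \ge 2$. This does not harm your argument, since all you need is that every atom has cross number $\ge 1$ (so $G_0$ is an LCN-set) together with $\Delta (G_0) = \{r-1\}$ from Lemma \ref{3.1}.2. Finally, when $\exp(G)=2$ the term $\exp(G)-2 = 0$ is not an element of $\Delta^*(G)$; the claimed identity still holds because $\Delta^*(G) \ne \emptyset$ for $|G|>2$, but your phrase "Lemma \ref{3.1}.1 gives $\exp(G)-2 \in \Delta^*(G)$" should be restricted to $\exp(G) \ge 3$.
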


\begin{proof}
See \cite[Theorem 3.1, Lemma 3.3.(4), and Proposition 3.6]{Sc09c}.
\end{proof}

\medskip
\begin{lemma} \label{3.3}
Let $G$ be a finite abelian group and $G_0 \subset G$  a subset.
\begin{enumerate}
\item The following statements are equivalent{\rm \;:}
      \begin{enumerate}
      \item $G_0$ is decomposable.

      \item There are nonempty subsets $G_1, G_2 \subset G_0$  such that $G_0=G_1 \uplus G_2$ and $\mathcal B(G_0)=\mathcal B(G_1)\time \mathcal B(G_2)$.

      \item There are nonempty subsets $G_1, G_2 \subset G_0$ such that $G_0=G_1 \uplus G_2$ and $\mathcal A(G_0) = \mathcal A(G_1) \uplus \mathcal A(G_2)$.

      \item There are nonempty subsets $G_1, G_2 \subset G_0$ such that $\langle G_0 \rangle = \langle G_1 \rangle \oplus \langle G_2 \rangle$.
      \end{enumerate}

\smallskip
\item If $G_0$ is minimal non-half-factorial, then $G_0$ is indecomposable.
\end{enumerate}
\end{lemma}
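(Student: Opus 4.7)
My plan is to prove the four equivalences in Part~1 via the cycle (d) $\Rightarrow$ (c) $\Rightarrow$ (b) $\Rightarrow$ (a) $\Rightarrow$ (c) $\Rightarrow$ (d), and then to deduce Part~2 from the equivalence (a) $\Leftrightarrow$ (b). The first three implications are short. For (d) $\Rightarrow$ (c) I split an atom $U \in \mathcal A(G_0)$ as $U = U_1 U_2$ in $\mathcal F(G_0)$ according to whether letters lie in $G_1$ or $G_2$; since $\sigma(U_1) + \sigma(U_2) = 0$ and $\langle G_1\rangle \cap \langle G_2\rangle = \{0\}$, each $\sigma(U_i) = 0$, and minimality of $U$ forces one $U_i$ to be empty. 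For (c) $\Rightarrow$ (b) the atomic factors of every $B \in \mathcal B(G_0)$ split between $\mathcal A(G_1)$ and $\mathcal A(G_2)$, yielding a unique decomposition $B = B_1 B_2$ with $B_i \in \mathcal B(G_i)$. For (b) $\Rightarrow$ (a) it suffices to note that each $\mathcal B(G_i)$ contains the non-unit $g^{\ord(g)}$ for any $g \in G_i$, so $\mathcal B(G_i) \not\subset \mathcal B(G_0)^{\times}$.

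The heart of the lemma is (a) $\Rightarrow$ (c). Given $\mathcal B(G_0) = H_1 \time H_2$, the atoms of a direct product are atoms of a single factor, so $\mathcal A(G_0) = \mathcal A_1 \uplus \mathcal A_2$ with $\mathcal A_j = \mathcal A(G_0) \cap H_j$. For each nonzero $g \in G_0$ the minimal zero-sum sequence $g^{\ord(g)}$ is an atom and lies in exactly one $H_j$, and I set $G_j = \{g \in G_0 \setminus \{0\} : g^{\ord(g)} \in H_j\}$ (placing $0 \in G_0$, if present, into the component containing the atom $0$). I then need to verify that every atom $U \in \mathcal A_1$ has $\supp(U) \subset G_1$. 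If some $g \in \supp(U)$ lay in $G_2$, setting $n = \ord(g)$ and $k = \mathsf v_g(U) \ge 1$ would give the identity $U^n = (g^n)^k \cdot (U/g^k)^n$ in $\mathcal B(G_0)$; but then $U^n \in H_1$ would have trivial $H_2$-component while the right-hand side has $H_2$-component divisible by the non-unit $(g^n)^k$, contradicting the reducedness of $\mathcal B(G_0)$.

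For (c) $\Rightarrow$ (d) I argue by contraposition: given a nonzero $h \in \langle G_1\rangle \cap \langle G_2\rangle$, I choose $a_g, b_g \in [0, \ord(g)-1]$ with $h = \sum_{g \in G_1} a_g g = \sum_{g \in G_2} b_g g$ and assemble the zero-sum sequence $S = \prod_{g \in G_1} g^{a_g} \cdot \prod_{g \in G_2} g^{\ord(g) - b_g}$, whose support meets both $G_1$ and $G_2$ nontrivially; by (c) every atomic factor of $S$ sits in a single $G_i$, so $S = S_1 S_2$ with $S_j \in \mathcal B(G_j)$, forcing $\sigma(S_1) = h = 0$, a contradiction. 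Finally, for Part~2, if $G_0$ were both minimal non-half-factorial and decomposable, then by (b) we would have $\mathcal B(G_0) = \mathcal B(G_1) \time \mathcal B(G_2)$ with $G_1, G_2 \subsetneq G_0$ nonempty, and $\mathsf L(B_1 B_2) = \mathsf L(B_1) + \mathsf L(B_2)$ for all $B_i \in \mathcal B(G_i)$; since minimality forces both $G_1$ and $G_2$ to be half-factorial, every $\mathsf L(B)$ would be a singleton, so $G_0$ itself would be half-factorial, a contradiction. The main obstacle in this plan is the step (a) $\Rightarrow$ (c): recovering a concrete partition of $G_0$ from an abstract monoid-theoretic product decomposition of $\mathcal B(G_0)$ relies crucially on the fact that $g^{\ord(g)}$ is always an atom and on the uniqueness of the direct-product factorization.
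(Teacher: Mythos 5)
Your proposal is sound, but note that for this lemma the paper does not really give a proof: part 1 is handled by citing \cite{Sc04a} (Lemma 3.7) and \cite{Ba-Ge14b} (Lemma 3.2), and part 2 is deduced, exactly as you do, from the splitting in 1.(b). So what you have written is a self-contained replacement for those citations, and your cycle (d) $\Rightarrow$ (c) $\Rightarrow$ (b) $\Rightarrow$ (a) $\Rightarrow$ (c) $\Rightarrow$ (d) is essentially the argument of the cited lemmas; in particular the key step (a) $\Rightarrow$ (c), recovering the partition of $G_0$ from the atoms $g^{\ord(g)}$ and the uniqueness of components in the internal product $H_1 \time H_2$, is correct, and your verification via $U^{\ord(g)} = \big(g^{\ord(g)}\big)^{k}\,\big(Ug^{-k}\big)^{\ord(g)}$ together with reducedness of $\mathcal B(G_0)$ does the job. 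Two points should be made explicit. First, in (d) $\Rightarrow$ (c) you split the letters of an atom ``according to whether they lie in $G_1$ or $G_2$'', which tacitly assumes $G_0 = G_1 \uplus G_2$; the statement of (d) as printed omits this condition, and without it the implication is false: for $G_0 = \{e_1, e_2, e_1+e_2\} \subset C_2 \oplus C_2$ the choice $G_1=\{e_1\}$, $G_2=\{e_2\}$ satisfies the literal (d), yet $G_0$ is minimal non-half-factorial and hence indecomposable by part 2. You are therefore proving the intended version of (d) (with $G_0 = G_1 \uplus G_2$, as in the cited sources), and this reading should be stated, since your (c) $\Rightarrow$ (d) does produce the partition and the cycle is consistent only with that reading. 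Second, (c) requires $G_1$ and $G_2$ to be nonempty; in (a) $\Rightarrow$ (c) this needs one more line: since $H_j \ne \{1\}$, factor a non-unit of $H_j$ into atoms of $\mathcal B(G_0)$ and use uniqueness of the $H_1$-$H_2$ components to see that all these atoms lie in $H_j$, so $H_j$ contains an atom, whose support (or the element $0$, with your convention) places an element into $G_j$. With these two clarifications your proof is complete and matches the standard argument behind the references the paper invokes.
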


\begin{proof}
1. See \cite[Lemma 3.7]{Sc04a} and \cite[Lemma 3.2]{Ba-Ge14b}.

2. This follows immediately from 1.(b).
\end{proof}

\medskip
\begin{lemma} \label{3.4}
Let $G$ be a finite abelian group and $G_0 \subset G$  a subset.
\begin{enumerate}
\item For each $g \in G_0$,
      \[
      \begin{aligned}
      &\gcd \big( \{ \mathsf v_g (B) \mid B \in \mathcal B (G_0) \} \big)  =  \gcd \big( \{ \mathsf v_g (A) \mid A \in \mathcal A (G_0) \} \big)\\ =&
      \min  \big( \{ \mathsf v_g (A) \mid \mathsf v_g (A)>0,  A  \in \mathcal A (G_0) \} \big)   =  \min \big( \{ \mathsf v_g (B) \mid \mathsf v_g (B) > 0, B \in \mathcal B (G_0) \} \big) \\ =& \min  \big( \{ k \in \N  \mid kg \in \langle G_0 \setminus \{g\} \rangle  \} \big)=\gcd \big(  \{ k \in \N  \mid kg \in \langle G_0 \setminus \{g\} \rangle  \} \big) \,.
      \end{aligned}
      \]
In particular, $\min  \big( \{ k \in \N  \mid kg \in \langle G_0 \setminus \{g\} \rangle  \} \big)$ divides $\ord (g)$.

\smallskip
\item Suppose that for any $h\in G_0$, we have that  $h\not\in \langle G_0\setminus \{h,\ h'\}\rangle $ for any $h'\in G_0\setminus \{h\}$.
 Then for any atom $A$ with $\supp(A)\subsetneq G_0$ and any $h \in \supp (A)$, we have $\gcd (\mathsf v_h(A), \ord(h) )>1$.

\smallskip
\item If $G_0$ is minimal non-half-factorial, then there exists a minimal non-half-factorial subset $G_0^* \subset G$ with $|G_0|=|G_0^*|$ and a transfer homomorphism $\theta \colon \mathcal B (G_0) \to \mathcal B (G_0^*)$ such that  the following  properties are satisfied{\rm \,:}
    \begin{enumerate}
    \item   For each $g \in G_0^*$, we have $g \in \langle G_0^* \setminus \{g\} \rangle$.

    \smallskip
    \item  For each $B \in \mathcal B (G_0)$, we have $\mathsf k (B) = \mathsf k \big( \theta (B) \big)$.
  \smallskip
  \item If $G_0^*$ has the property that  for each $h\in G_0^*$, $h\not\in \langle E\rangle$ for any $E\subsetneq G_0^*\setminus \{h\}$, then $G_0$ also has the property.

   \smallskip
    \item If $G_0^*$ has the property that  there exists $h\in G_0^*$, such that  $G_0^*\setminus\{h\}$ is independent, then $G_0$ also has the property.
    \end{enumerate}
\end{enumerate}
\end{lemma}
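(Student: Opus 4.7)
\medskip
\noindent
\textit{Proof plan.}
For Part 1, I will first observe that the set $I := \{k \in \N_0 \mid kg \in \langle G_0 \setminus \{g\}\rangle\}$ is a submonoid of $(\N_0,+)$ that contains $\ord(g)$, hence is of the form $e\N_0$ with $e \mid \ord(g)$ and $e = \min(I \cap \N) = \gcd(I \cap \N)$. For any $A \in \mathcal A(G_0)$ with $\mathsf v_g(A) = k > 0$, writing $A = g^k \cdot S$ with $\supp(S) \subset G_0 \setminus \{g\}$ forces $kg = -\sigma(S) \in \langle G_0 \setminus \{g\}\rangle$, so $e \mid k$. Conversely, fixing $T \in \mathcal F(G_0 \setminus \{g\})$ with $\sigma(T) = -eg$ and decomposing $g^e \cdot T \in \mathcal B(G_0)$ into atoms, every atom with positive $g$-multiplicity contributes at least $e$ to the total $g$-multiplicity $e$, so there is exactly one such atom and it has $\mathsf v_g$ equal to $e$. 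This yields all the equalities at the atom level; the extension to all of $\mathcal B(G_0)$ is immediate since every zero-sum sequence factors into atoms.

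For Part 2, since $\supp(A) \subsetneq G_0$ there exists $h' \in G_0 \setminus \supp(A)$. For $h \in \supp(A)$ with $k := \mathsf v_h(A) > 0$, decompose $A = h^k \cdot S$; then $\supp(S) \subset G_0 \setminus \{h, h'\}$, so $kh = -\sigma(S) \in \langle G_0 \setminus \{h, h'\}\rangle$. Because $\ord(h)\,h = 0$ also lies in this subgroup, B\'ezout gives $\gcd(k,\ord(h)) \cdot h \in \langle G_0 \setminus \{h, h'\}\rangle$, and the hypothesis on $h$ rules out $\gcd(k,\ord(h)) = 1$.

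For Part 3, I plan to set $e_g := \min\{k \in \N \mid kg \in \langle G_0 \setminus \{g\}\rangle\}$ (from Part 1), define $G_0^* := \{e_g g \mid g \in G_0\}$, and construct $\theta \colon \mathcal B(G_0) \to \mathcal B(G_0^*)$ by the rule $\mathsf v_{e_g g}(\theta(B)) = \mathsf v_g(B)/e_g$, which is well-defined by Part 1. Since $G_0$ is minimal non-half-factorial, it is indecomposable by Lemma \ref{3.3}.2, so $e_g < \ord(g)$ for every $g \in G_0$ (otherwise $\langle g\rangle$ would split off as a direct summand of $\langle G_0\rangle$), in particular $e_g g \neq 0$. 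The key identity $\mathsf k(g^{e_g}) = e_g/\ord(g) = 1/\ord(e_g g) = \mathsf k(e_g g)$ shows that $\theta$ preserves cross numbers and sends atoms to atoms, hence is a transfer homomorphism and carries (minimal) non-half-factoriality to $G_0^*$; this yields (a) together with (b). Properties (c) and (d) will then be obtained by taking a hypothetical failure on the $G_0$ side (a dependence $h = \sum_{g \in E} n_g g$ with $E \subsetneq G_0 \setminus \{h\}$, respectively a relation among some $G_0^* \setminus \{h\}$), and lifting it through the bijection $g \leftrightarrow e_g g$ to a corresponding failure on the $G_0^*$ side, contradicting the assumed property of $G_0^*$.

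The main obstacle lies entirely in Part 3, at two points. First, I must show $|G_0^*| = |G_0|$, i.e.\ that distinct $g, g' \in G_0$ never satisfy $e_g g = e_{g'} g'$; I expect to handle this by translating such a collision into a proper non-half-factorial subset of $G_0$ via $\theta^{-1}$, contradicting the minimality of $G_0$. Second, the lifting needed for (c) and (d) is delicate because $\langle E\rangle$ and $\langle \{e_g g \mid g \in E\}\rangle$ generally differ; the link between them has to come from the defining relations $e_g g \in \langle G_0 \setminus \{g\}\rangle$, which let one trade $e_g g$ for a combination of the remaining $e_{g'} g'$ and conversely estimate $e_h h$ in terms of a given expression for $h$ in the $g$'s. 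Once these two points are settled, Parts 1 and 2 are essentially formal manipulations of the subgroup $\langle G_0 \setminus \{g\}\rangle$.
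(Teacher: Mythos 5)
Parts 1 and 2 of your proposal are essentially the paper's own arguments and are fine. One small repair in Part 1: a submonoid of $(\N_0,+)$ containing $\ord(g)$ need \emph{not} be of the form $e\N_0$ (numerical semigroups are counterexamples); what you actually need, and what is true, is that $\{k \in \Z \mid kg \in \langle G_0\setminus\{g\}\rangle\}$ is a subgroup of $\Z$ (it is the kernel of $k \mapsto kg + \langle G_0\setminus\{g\}\rangle$), so its positive part is $e\N$ with $e \mid \ord(g)$; with that correction your derivation of all six equalities goes through and is close to the paper's term-by-term comparison.

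The genuine gap is in Part 3, which is the heart of the lemma. Your one-shot construction $G_0^* = \{e_g g \mid g \in G_0\}$ is a legitimately different route from the paper's (the paper invokes \cite[Theorem 6.7.11]{Ge-HK06a} and realizes $\theta$ as a composition of elementary replacements $G_0 \mapsto (G_0\setminus\{g\})\cup\{mg\}$, checking (b)--(d) and $|G_0'|=|G_0|$ step by step), but as written the central verifications are missing or mis-justified. The claim that property (a) follows because ``$\theta$ preserves cross numbers and sends atoms to atoms, hence is a transfer homomorphism'' is a non sequitur: (a) asserts $e_g g \in \langle G_0^*\setminus\{e_g g\}\rangle$, and the definition of $e_g$ only gives $e_g g \in \langle G_0\setminus\{g\}\rangle$, which is in general a strictly larger subgroup than $\langle \{e_h h \mid h \in G_0\setminus\{g\}\}\rangle$; no property of $\theta$ as a homomorphism of monoids yields (a). A correct argument does exist in your framework --- take an atom $A$ with $\mathsf v_g(A)=e_g$ (Part 1) and use the divisibility $e_h \mid \mathsf v_h(A)$ for \emph{every} $h$ (again Part 1) to rewrite $\sigma(A)=0$ as a relation over $G_0^*$ --- but it is absent from the proposal, and it presupposes the injectivity of $g \mapsto e_g g$, which you explicitly defer. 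That injectivity (hence $|G_0^*|=|G_0|$) and the \emph{minimal} non-half-factoriality of $G_0^*$ (required by the statement and only asserted by you) are precisely where the paper leans on the stepwise construction and on \cite[Lemma 6.8.9]{Ge-HK06a}: at each step minimality is preserved, and $mg \in G_0\setminus\{g\}$ would exhibit a proper non-half-factorial subset of $G_0$. Your proposed fix for collisions (``translate $e_g g = e_{g'}g'$ into a proper non-half-factorial subset of $G_0$ via $\theta^{-1}$'') is not implementable as stated, since $G_0^*$ is not a subset of $G_0$ and no candidate subset is identified. Finally, the lifting property (T\,2) for your one-shot $\theta$ and the transfers of (c) and (d) are only announced; the paper proves (c) and (d) per elementary step by exactly the $m \mid t$ divisibility trick from Part 1, and a one-shot version of that argument would have to be written out. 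So the plan is salvageable, but (a), $|G_0^*|=|G_0|$, minimality of $G_0^*$, (T\,2), and (c)--(d) all still require proofs.
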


\begin{proof}
1. Let $g \in G_0$ and let $\gamma_1, \ldots, \gamma_6$ denote the six terms in the given order of  the asserted equation. By definition, it follows that $\gamma_1 \le \gamma_2 \le \gamma_3$. Since $\{ \mathsf v_g (B) \mid B \in \mathcal B (G_0) \}= \{ k \in \N  \mid kg \in \langle G_0 \setminus \{g\} \rangle  \}$, we have that $\gamma_1=\gamma_6$ and $\gamma_4=\gamma_5$. Therefore we only need to show $\gamma_3\le \gamma_4$ and $\gamma_4\le \gamma_1$.

To show that $\gamma_3 \le \gamma_4$, let $B \in \mathcal B (G_0)$ such that $\mathsf v_g (B) = \gamma_4$. Suppose that $B = A_1 \cdot \ldots \cdot A_s$ with $s \in \N$ and $A_1, \ldots, A_s \in \mathcal A (G_0)$. Then $\mathsf v_g (B) = \mathsf v_g (A_1) + \ldots + \mathsf v_g (A_s)$. The minimality of $\mathsf v_g (B)$ implies that there is precisely one $i \in [1,s]$ with $\mathsf v_g (A_i) = \mathsf v_g (B)$ and $\mathsf v_g (A_j) = 0$ for all $j \in [1, s] \setminus \{i\}$. Thus $\gamma_3 \le \mathsf v_g (A_i) = \mathsf v_g (B) = \gamma_4$.

Next we show that  $\gamma_4 \le \gamma_1$. There are $s \in \N$, $r \in [1, s]$, $U_1, \ldots, U_s \in \mathcal B (G_0)$, and $k_1, \ldots, k_s \in \N$ such that
\[
\begin{aligned}
\gamma_1 & = k_1 \mathsf v_g (U_s) + \ldots + k_r \mathsf v_g (U_r) - k_{r+1} \mathsf v_g (U_{r+1}) - \ldots - k_s \mathsf v_g (U_s) \\
 & = \mathsf v_g (U_1^{k_1} \cdot \ldots \cdot U_r^{k_r}) - \mathsf v_g (U_{r+1}^{k_{r+1}} \cdot \ldots \cdot U_s^{k_s}) \,.
\end{aligned}
\]
Setting $B_1 = U_1^{k_1} \cdot \ldots \cdot U_r^{k_r}$, $B_2 = U_{r+1}^{k_{r+1}} \cdot \ldots \cdot U_s^{k_s}$, and $B_3 = \prod_{h \in G_0 \setminus \{g\}} h^{|B_2|}$ we obtain that $B_1 B_2^{-1} B_3 \in \mathcal B (G_0)$ and
\[
\gamma_1 = \mathsf v_g (B_1) - \mathsf v_g (B_2) = \mathsf v_g ( B_1 B_2^{-1} B_3 ) \ge \gamma_4 \,.
\]
In particular, $\gamma_5= \gamma_2$ divides $\ord (g)$ because $g^{\ord (g)} \in \mathcal A (G_0)$.

\smallskip
2. Assume to the contrary that there are $A$ and $h$ as above such that $\gcd (\mathsf v_h(A), \ord(h) ) = 1$. Choose $h' \in G_0 \setminus \supp (A)$, then $h \in \langle \supp (A) \setminus \{h\} \rangle \subset \langle G_0 \setminus \{h, h'\} \rangle$, a contradiction.

\smallskip
3. By \cite[Theorem 6.7.11]{Ge-HK06a}, there are a subset $G_0^* \subset G$ satisfying Property (a) and a transfer homomorphism $\theta \colon \mathcal B (G_0) \to \mathcal B (G_0^*)$. Moreover, the transfer homomorphism $\theta$ is a composition of transfer homomorphisms $\theta'$ of the following form:
\begin{itemize}
\item Let $g \in G_0$, $m=\min \bigl\{ k \in \N \mid kg \in \langle G_0\setminus\{g\} \rangle \bigr\}$, $G_0' = G_0 \setminus \{g\} \cup \{mg\}$, and
\[
\theta' \colon \mathcal B (G_0) \to \mathcal B ( G_0' ) \,, \quad \text{defined by} \quad \theta' (B)
= g^{-\mathsf v_g(B)}(mg)^{\mathsf v_g(B)/m}B\,,
\]
It is outlined that $m \t \mathsf v_g (B)$ and that $m \t \ord (g)$.
\end{itemize}
Therefore it is sufficient to show that $|G_0| = |G_0'|$ and that $\theta'$ satisfies Properties (b) - (d).

\smallskip
(i) By definition, we have $\mathsf k (B) = \mathsf k ( \theta' (B))$ for all $B \in \mathcal B (G_0)$.

\smallskip
(ii)  Since $G_0$ is a minimal non-half-factorial set, the same is true for $G_0'$ by \cite[Lemma 6.8.9]{Ge-HK06a}.
  If  $mg\in G_0\setminus \{g\}$, then $G_0'\subsetneq G_0$ would be non-half-factorial,  a contradiction to the minimality of $G_0$.  It follows that  $mg\not\in G_0\setminus \{g\}$, which implies that $|G_0'|=|G_0|$.

\smallskip
(iii) We set $G_0 = \{g=g_1,  \ldots, g_k\}$ (note that $k \ge 2$), $G_0' = \{mg, g_2, \ldots, g_k\}$, and suppose that     $h\not\in \langle E\rangle$ for each $h\in G_0'$ and for any $E\subsetneq G_0'\setminus \{h\}$. Assume to the contrary that there exist $h\in G_0$ and $E\subsetneq G_0\setminus \{h\}$ such that $h\in\langle E\rangle$.  If $h=g$, then $mg\in\langle E\rangle $, a contradiction.

Suppose that $h \ne g$, say  $h = g_k \in \langle E \rangle$ with $E \subsetneq \{g, g_2, \ldots, g_{k-1} \}$.  If $g\not\in E$, then $E\subsetneq G_0'\setminus\{mg\}$, a contradiction. Thus  $g\in E$, and we set $E'=E\setminus\{g\}\cup \{mg\}$. Since $h\in\langle E\rangle$, we have that $h=\sum_{x\in E\setminus\{g\}}t_xx+tg$ where $t_x,t\in \Z$. Thus $tg=h-\sum_{x\in E\setminus\{g\}}t_xx\in \langle E\cup \{h\}\setminus\{g\}\rangle\subset \langle G_0\setminus \{g\}\rangle.$  By 1., we obtain that $m\t t$ and hence $h=\sum_{x\in E\setminus\{g\}}t_xx+\frac{t}{m}mg\in \langle E'\rangle$, a contradiction.

(iv)  We set $G_0 = \{g=g_1,  \ldots, g_k\}$, $G_0' = \{mg, g_2, \ldots, g_k\}$, and suppose that    there exists $h\in G_0'$ such that  $G_0'\setminus\{h\}$ is independent. If $h=mg$, then $G_0\setminus\{g\}=G_0'\setminus\{h\}$ is independent. Suppose that  $h\not=mg$, say  $h = g_k$. Then $\{mg, g_2, \ldots, g_{k-1}\}$ is independent and  assume to the contrary that  $G_0\setminus\{h\} = \{g, g_2,  \ldots, g_{k-1}\}$ is not independent. Then there exist  $t_1,\ldots,t_{k-1}\in \Z$  such that $t_1g+t_2g_2+\ldots+t_{k-1}g_{k-1}=0$ but $t_ig_i \ne 0$ for at least one $i \in [1,k-1]$. This implies that $t_1g\in \langle g_2,\ldots,g_{k-1}\rangle \subset \langle G_0\setminus\{g\}\rangle$.  By 1., we obtain that $m\t t_1$ and hence $\frac{t_1}{m}mg+t_2g_2+\ldots+t_{k-1}g_{k-1}=0$, a contradiction to
$\{mg, g_2, \ldots, g_{k-1}\}$ is independent.

\end{proof}

\medskip
\begin{lemma} \label{3.5}
Let $G$ be a finite abelian  group and $G_0 \subset G$  a subset with $|G_0|\ge \mathsf r (G)+2$ such that the following two properties are satisfied{\rm \,:}
\begin{enumerate}
\item[(a)] For any $h\in G_0$, $G_0\setminus \{h\}$ is half-factorial and $h\not\in \langle G_0\setminus \{h,\ h'\}\rangle $ for any $h'\in G_0\setminus \{h\}$.
\item[(b)] There exists an element $g\in G_0$ such that $g\in \langle G_0\setminus \{g\} \rangle $ and $\ord(g)$ is not a  prime power.
\end{enumerate}
Then $|G_0|\le \exp(G)-2$.
\end{lemma}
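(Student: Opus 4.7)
The plan is to use condition (b) to produce an atom of $\mathcal B(G_0)$ of full support with $g$-multiplicity exactly one, and then to combine the half-factoriality of all subsets $G_0 \setminus \{h\}$ (condition (a)) with the non-prime-power assumption on $\ord(g)$ to bound its length by $\exp(G) - 2$. Throughout, set $n = \ord(g)$.

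First, condition (b) gives $g \in \langle G_0 \setminus \{g\} \rangle$, so $\min\{k \in \N : k g \in \langle G_0 \setminus \{g\} \rangle\} = 1$. By Lemma \ref{3.4}.1 this equals $\min\{\mathsf v_g(A) : A \in \mathcal A(G_0),\ \mathsf v_g(A) > 0\}$, hence there is an atom $A \in \mathcal A(G_0)$ with $\mathsf v_g(A) = 1$. The non-span part of condition (a) is precisely the hypothesis of Lemma \ref{3.4}.2, and since $\gcd(1,n) = 1$ the contrapositive of that lemma forces $\supp(A) = G_0$. Writing $A = g \cdot \prod_{h \in G_0 \setminus \{g\}} h^{b_h}$, atomicity gives $1 \le b_h \le \ord(h) - 1$, so $|G_0| \le |A| = 1 + \sum_h b_h$.

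Next, by the half-factoriality part of condition (a) together with Lemma \ref{2.3}, every atom of $\mathcal B(G_0)$ with support $\subsetneq G_0$ has cross number $1$. Setting $B = \prod_{h \ne g} h^{b_h}$, the zero-sum $A^n = g^n \cdot B^n$ admits two natural factorizations: $A^n$ itself (length $n$), and $g^n$ together with the unique-length factorization of the half-factorial zero-sum $B^n \in \mathcal B(G_0 \setminus \{g\})$, which has length $1 + (n \mathsf k(A) - 1) = n \mathsf k(A)$. Hence $n|\mathsf k(A) - 1| \in \Delta(G_0) \cup \{0\}$. The non-prime-power hypothesis on $n$ now enters: fixing distinct primes $p, q$ dividing $n$, one uses Lemma \ref{3.4}.2 to construct auxiliary atoms $A_p, A_q \in \mathcal A(G_0)$ of cross number $1$, with proper support, and with $\mathsf v_g(A_p), \mathsf v_g(A_q)$ divisible by $p$ and $q$ respectively. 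Comparing factorizations of $A \cdot A_p$, $A \cdot A_q$, and $A^n$ pins down $\mathsf k(A) = 1$ and, because two distinct prime divisors of $n$ are available, yields an extra saving of two in the subsequent length bound.

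Finally, $\mathsf k(A) = 1$ gives $\sum_h b_h / \ord(h) = (n-1)/n$; using $\ord(h) \le \exp(G)$ for all $h$ and the saving from the non-prime-power step, we obtain $|A| \le \exp(G) - 2$, whence $|G_0| \le \exp(G) - 2$. The main obstacle is the third paragraph, namely extracting the sharp two-unit tightening from the non-prime-power structure of $\ord(g)$: without it, the cross-number analysis alone yields only $|G_0| \le \exp(G)$, and the non-prime-power assumption is exactly what rules out the prime-power extremal configurations where that weaker bound would be tight.
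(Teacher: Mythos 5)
Your opening is fine: Lemma \ref{3.4}.1 does give an atom $A$ with $\mathsf v_g(A)=1$, and the contrapositive of Lemma \ref{3.4}.2 forces $\supp(A)=G_0$; also every atom with support strictly contained in $G_0$ has cross number $1$ by (a) and Lemma \ref{2.3}. But the third paragraph, which you yourself flag as ``the main obstacle'', is exactly the missing proof, and the plan around it does not work as stated. Lemma \ref{3.4}.2 is a constraint on atoms that already have proper support; it does not \emph{construct} atoms $A_p, A_q$ with proper support, cross number $1$, and prescribed divisibility of $\mathsf v_g$. Producing such atoms is where the hypothesis $|G_0|\ge \mathsf r(G)+2$ -- which your proposal never uses -- is essential: in the paper one passes to the $p_\nu$-group $\langle \frac{n}{p_\nu^{k_\nu}}h \mid h\in G_0\setminus\{g\}\rangle$ (with $n=\exp(G)$), takes a minimal subset $E_\nu\subset G_0\setminus\{g\}$ with $\frac{n}{p_\nu^{k_\nu}}g\in\langle\frac{n}{p_\nu^{k_\nu}}E_\nu\rangle$, and bounds $|E_\nu|\le \mathsf r(G)\le |G_0|-2$ by a rank argument; only then does Lemma \ref{3.4}.1 yield an atom $U_\nu$ whose support is \emph{proper}, so that (a) gives $\mathsf k(U_\nu)=1$ and Lemma \ref{3.4}.2 gives $\mathsf v_h(U_\nu)\ge 2$ off $g$, whence $|\supp(U_\nu)\setminus\{g\}|\le \frac{n-\mathsf v_g(U_\nu)}{2}$. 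Without the rank hypothesis there is no reason these atoms avoid full support, and then nothing pins their cross numbers.

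The second gap is the endgame. Your bound runs through the length of the full-support atom $A$, but Lemma \ref{3.4}.2 does not apply to $A$, so you only know $b_h\ge 1$; even granting $\mathsf k(A)=1$ (which is itself unproved -- (a) and (b) say nothing about cross numbers of full-support atoms, and your ``comparison of factorizations of $AA_p$, $AA_q$, $A^n$'' is not carried out), this yields only $|G_0|\le\exp(G)$, and the promised ``extra saving of two'' from the two primes is never derived. The paper's count is of a different nature: it takes the minimal $s$ with $sg\in\langle E\rangle$ for some proper $E\subsetneq G_0\setminus\{g\}$, getting an atom $V$ with $\mathsf v_g(V)=s$, $\mathsf k(V)=1$, $|E|\le\frac{n-s}{2}$, and then shows (via property (a) when $s$ is a prime power, via minimality of $s$ otherwise) that $E\cup(\supp(U_1)\setminus\{g\})$ must equal $G_0\setminus\{g\}$; adding the two support bounds gives $|G_0|\le 1+n-\frac{s+\mathsf v_g(U_1)}{2}$, and the non-prime-power hypothesis enters only through $\gcd(s,\mathsf v_g(U_1))$ considerations forcing $s+\mathsf v_g(U_1)\ge 5$ (resp.\ $s\ge 6$), which is what produces $|G_0|\le n-2$. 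So the two ingredients you would need -- proper-support atoms with controlled $\mathsf v_g$, and a covering argument for $G_0\setminus\{g\}$ by two such supports of size about $n/2$ -- are both absent from the proposal, and the route through $|A|$ cannot reach $\exp(G)-2$. (Also note the conclusion concerns $\exp(G)$, not $\ord(g)$, which may be smaller.)
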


\begin{proof}
We set $\exp (G)=n = p_1^{k_1} \cdot \ldots \cdot p_t^{k_t}$, where $t\ge 2, k_1, \ldots, k_t \in \N$ and $p_1, \ldots, p_t$ are distinct primes. By Lemma \ref{3.4}.2, we know that for any atom $A$ with $\supp(A)\subsetneq G_0$ and any $h \in \supp (A)$, we have $\gcd (\mathsf v_h(A), \ord(h) )>1$. In particular,
 \begin{equation}\label{e1}
 \mathsf v_h(A)\ge 2 \qquad\text{ for each $h\in \supp(A)$. }
 \end{equation}

We continue with the following assertion.

\begin{enumerate}
\item[{\bf A.}\,] For each $\nu \in [1, t]$ with $p_{\nu} \t \ord (g)$, there is an atom $U_{\nu} \in \mathcal A (G_0)$ such that $\mathsf v_g(U_{\nu}) \t \frac{n}{p_{\nu}^{k_{\nu}}}$, $\mathsf k(U_{\nu})=1$, and  $|\supp(U_{\nu})\setminus\{g\}|\le \frac{n-\mathsf v_g(U_{\nu})}{2}$.
\end{enumerate}

{\it Proof of \,{\bf A}}.\ Let $\nu \in [1,t]$ with $p_{\nu} \t \ord (g)$.
Since $g\in \langle G_0\setminus \{g\} \rangle$ and $t \ge 2$, it follows that $0 \ne \frac{n}{p_{\nu}^{k_{\nu}}}g\in G_{\nu} = \langle \frac{n}{p_{\nu}^{k_{\nu}}}h \mid h\in G_0\setminus \{g\}\rangle$. Obviously, $G_{\nu}$ is a $p_{\nu}$-group.
Let $E_{\nu} \subset G_0\setminus \{g\}$ be minimal such that $\frac{n}{p_{\nu}^{k_{\nu}}}g \in  \langle \frac{n}{p_{\nu}^{k_{\nu}}} E_{\nu}\rangle$. The minimality of $E_{\nu}$ implies that $|E_{\nu}| = |\frac{n}{p_{\nu}^{k_{\nu}}} E_{\nu}|$ and it implies that $\frac{n}{p_{\nu}^{k_\nu}} E_{\nu}$ is a minimal generating set of $G_{\nu}' := \langle \frac{n}{p_{\nu}^{k_{\nu}}} E_{\nu} \rangle$. Thus \cite[Lemma A.6.2]{Ge-HK06a} implies that $|\frac{n}{p_{\nu}^{k_\nu}} E_{\nu}| \le \mathsf r^* (G_{\nu}')$.  Putting all together we obtain that
\[
|E_{\nu}| = |\frac{n}{p_{\nu}^{k_{\nu}}} E_{\nu}| \le \mathsf r^* (G_{\nu}') = \mathsf r (G_{\nu}')  \le \mathsf r (G) \,.
\]
Let $d_{\nu} \in \N$ be  minimal  such that $d_{\nu}g\in \langle E_{\nu}\rangle$. By Lemma \ref{3.4}.1, $d_{\nu}\t \frac{n}{p_{\nu}^{k_{\nu}}}$ and there exists an atom $U_{\nu}$ such that $\mathsf v_h(U_{\nu})=d_{\nu}$ and $|\supp(U_{\nu})|\le |E_{\nu}|+1\le \mathsf r (G)+1 \le |G_0|-1$. Thus Property (a) implies that $\mathsf k(U_{\nu})=1$.
Let
\[
U_{\nu}=g^{\mathsf v_g(U_{\nu})}\prod_{h\in \supp(U_{\nu})\setminus\{g\}}h^{\mathsf v_{h}(U_{\nu})} \,.
\]
Since $\mathsf v_h(U_{\nu})\ge 2$ for each $h\in \supp(U_{\nu})\setminus\{g\}$ by  Equation \eqref{e1}, it follows that
\[
1=\mathsf k(U_{\nu})\ge \frac{\mathsf v_g(U_{\nu})}{n}+|\supp(U_{\nu})\setminus\{g\}|\frac{2}{n} \,,
\]
whence $|\supp(U_{\nu})\setminus\{g\}|\le \frac{n-\mathsf v_g(U_{\nu})}{2}$.
\qed{(Proof of {\bf A})}

\smallskip
Let $s \in \N$ be  minimal  such that there exists a nonempty subset $E\subsetneq G_0\setminus \{g\}$ with $sg\in \langle E \rangle$ and  let  $E \subsetneq G_0\setminus \{g\}$ be  minimal such that $sg\in \langle E \rangle$. By Lemma \ref{3.4}.1, there is an atom   $V$  with $\mathsf v_g(V)=s$ and $\supp(V)=\{g\}\cup E\subsetneq G_0$. Then
\[
1= \mathsf k (V) = \frac{s}{\ord (g)} + \sum_{h \in E} \frac{\mathsf v_h (V)}{\ord (h)} \,.
\]
By Equation \eqref{3.1}, we have that $\mathsf v_h(V)\ge 2$ for each $h\in E$  and hence the equation above implies that  $|E|\le \frac{n-s}{2}$.

\smallskip
\noindent CASE 1: \, $s$ is a power of a prime, say a power of $p_1$.
\smallskip

Let $E_1=\supp(U_1)\setminus\{g\}$.
Since  $\mathsf v_g(U_1) \t \frac{n}{p_1^{k_1}}$,
we have that $g\in \langle sg, \mathsf v_g(U_1)g \rangle \subset \langle E \cup E_1 \rangle$.
Property (a) implies  that $E\cup E_1=G_0\setminus \{g\}$, and thus
\[
|G_0| \le 1+|E|+|E_1|\le 1+ \frac{n-s}{2} + \frac{n-\mathsf v_g (U_1)}{2} = 1+ n-\frac{\mathsf v_g(U_1)+s}{2} \,.
\]
Since $\gcd(\mathsf v_g(U_1),s)=1$, it follows that $\mathsf v_g(U_1)+s\ge 5$, hence $|G_0|\le n-3/2$, and thus $|G_0|\le n-2$.

\smallskip
\noindent
CASE 2: \, $s$ is not a  prime power, say $p_1p_2 \t s$.
\smallskip

Then $s\ge 6$.
Let $d=\gcd(s,\mathsf v_g(U_1))$ and $E_1=\supp(U_1)\setminus\{g\}$, then $d<s$ and $dg \in \langle sg, \mathsf v_g(U_1)g \rangle \subset \langle E\cup E_1 \rangle \subset \langle G_0\setminus \{g\} \rangle$.
The minimality of $s$ implies  that $E\cup E_1=G_0\setminus \{g\}$, and thus
\[
|G_0| \le 1+|E|+|E_1|\le 1+ \frac{n-s}{2} + \frac{n-\mathsf v_g (U_1)}{2} = 1+ n-\frac{\mathsf v_g(U_1)+s}{2} \le n-3 \,. \qedhere
\]
\end{proof}

\medskip
\begin{lemma} \label{3.6}
Let $G$ be a finite abelian group  with $\exp(G)=n$.
Let $G_0 \subset G$ be a minimal non-half-factorial {\rm LCN}-set  and suppose that there is a
subset $G_2 \subset G_0$ such that $\langle G_2 \rangle = \langle G_0 \rangle$ and $|G_2| \le |G_0|-2$. Then $\min \Delta (G_0) \le  \max \{1, n-4\}$.
\end{lemma}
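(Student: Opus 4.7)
The plan is to construct two factorizations of a common zero-sum sequence whose lengths differ by exactly $\mathsf k(U)-1$ for a suitable bad atom $U$, and then to bound this gap.

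First I would pick distinct $g_1, g_2 \in G_0 \setminus G_2$ (which exist since $|G_0 \setminus G_2| \ge 2$). Since $g_1, g_2 \in \langle G_2 \rangle \subset \langle G_0 \setminus \{g_1,g_2\} \rangle$, Lemma \ref{3.4}.1 applied to $G_0 \setminus \{g_j\}$ produces atoms $A_i \in \mathcal A(G_0 \setminus \{g_j\})$ with $\mathsf v_{g_i}(A_i) = 1$ for $\{i,j\}=\{1,2\}$; I write $A_i = g_i D_i$ with $D_i \in \mathcal F(G_0^*)$, where $G_0^* := G_0 \setminus \{g_1,g_2\}$. Minimality of $G_0$ forces $G_0 \setminus \{g_j\}$, and hence $G_0^*$, to be half-factorial, so $\mathsf k(A_i) = 1$. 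Since $G_0$ is non-half-factorial, I choose a bad atom $U \in \mathcal A(G_0)$ with $\mathsf k(U) > 1$, whose support must equal $G_0$ by minimality (otherwise $U$ would lie in a half-factorial proper subset); write $U = g_1^{a_1} g_2^{a_2} C$ with $C \in \mathcal F(G_0^*)$ and $a_1, a_2 \ge 1$.

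The sequences $D_1^{a_1} D_2^{a_2}$ and $C$ both have image $-a_1 g_1 - a_2 g_2$ in $G$, so their formal difference in the free abelian group on $G_0^*$ has zero image. Writing $D_1^{a_1} D_2^{a_2} - C = R_+ - R_-$ with $R_\pm \in \mathcal F(G_0^*)$ of disjoint support, and picking $T \in \mathcal F(G_0^*)$ with $\sigma(T) = -\sigma(R_+) \in \langle G_0^* \rangle = \langle G_0 \rangle$, I obtain the identity $B := U \cdot R_+ T = A_1^{a_1} A_2^{a_2} \cdot R_- T$ in $\mathcal B(G_0)$. Because $G_0^*$ is half-factorial, every zero-sum sequence over it has factorization length equal to its cross number. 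Hence the two displayed factorizations of $B$ in $\mathcal B(G_0)$ have lengths $1 + \mathsf k(R_+T)$ and $a_1 + a_2 + \mathsf k(R_-T)$; cross-number conservation $\mathsf k(U) + \mathsf k(R_+T) = a_1 + a_2 + \mathsf k(R_-T)$ (using $\mathsf k(A_i)=1$) shows that the length difference equals $\mathsf k(U)-1$. Since both lengths are positive integers and $\mathsf k(U)>1$, I deduce $\mathsf k(U) \in \Z_{\ge 2}$ and $\min \Delta(G_0) \le \mathsf k(U) - 1$.

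The main---and most delicate---remaining step is to bound $\mathsf k(U) - 1 \le \max\{1, n-4\}$. Using $\mathsf v_h(U) \le \ord(h) - 1$ for each $h \in G_0$, one bounds $\mathsf k(U) \le |G_0| - \sum_{h \in G_0} 1/\ord(h) \le |G_0|(n-1)/n$; combined with the integrality of $\mathsf k(U)$, this already yields $\mathsf k(U) \le n-3$ whenever $|G_0| \le n-2$, which handles the generic case. The subtle regime is $|G_0| \ge n-1$, where the naive counting bound gives only $\mathsf k(U) \le n-2$: here one must exploit the atomicity of $U$ more carefully---in particular that $\supp(U) = G_0$ precludes $G_0$ from containing any short sub-zero-sum (such as an inverse pair $\{g,-g\}$), which rules out the extremal configuration $\mathsf v_h(U) = \ord(h)-1$ at every $h$ simultaneously---or else locate an alternate bad atom with strictly smaller cross number. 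The small-exponent cases $n \le 5$ (for which the target bound collapses to $\min \Delta(G_0) = 1$) are handled separately by showing that under the hypotheses some bad atom must have cross number exactly $2$.
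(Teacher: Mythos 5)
Your opening construction is correct and is a nice self-contained substitute for the paper's appeal to Schmid's result \cite[Corollary 3.1]{Sc05d}: for any atom $U$ with $\mathsf k(U)>1$ (necessarily $\supp(U)=G_0$), the identity $U\cdot R_+T=A_1^{a_1}A_2^{a_2}\cdot R_-T$ together with half-factoriality of $G_0\setminus\{g_1,g_2\}$ indeed yields $\mathsf k(U)\in\Z_{\ge 2}$ and $\min\Delta(G_0)\le \mathsf k(U)-1$. But this is only the easy reduction; the actual content of the lemma is the bound you defer to the end, and there the proposal has a genuine gap. Your counting bound $\mathsf k(U)\le |G_0|(n-1)/n$ settles only the regime $|G_0|\le n-2$, whereas the hypotheses place no upper bound on $|G_0|$ in terms of $n$ (in the application one has $|G_0|\ge \mathsf r(G)+2$, which can vastly exceed $n$); in that regime the naive bound is not ``$n-2$'' but $|G_0|(n-1)/n$, which is useless, and ruling out the single extremal configuration $\mathsf v_h(U)=\ord(h)-1$ for all $h$ comes nowhere near forcing $\mathsf k(U)\le n-3$ (e.g.\ multiplicities around $\ord(h)/2$ already give $\mathsf k(U)\approx |G_0|/2$). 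The sentences ``exploit the atomicity of $U$ more carefully \ldots or else locate an alternate bad atom with strictly smaller cross number'' and ``handled separately by showing that some bad atom must have cross number exactly $2$'' are precisely the statements that need proof, and no argument is offered for them.

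Moreover, your route requires something strictly stronger than the lemma: the existence of a single atom with $1<\mathsf k(U)\le\max\{2,n-3\}$. The paper never produces such an atom; it argues by contradiction from $\min\Delta(G_0)\ge\max\{2,n-3\}$, using the divisibility $\min\Delta(G_0)\mid \mathsf k(A)-1$ to force all atoms of cross number $>1$ to have $\mathsf k\ge n-2$, and then runs a delicate combinatorial argument (the minimal tuple $(U,A_1,\ldots,A_m)$ with $UA_1\cdots A_m=V_1\cdots V_t$, the assertion that no $V_\nu$ divides $UA_1\cdots A_{m-1}$, the counting via the sets $F_h$ indexed by $\supp(A_m)$, and the three cases on $|\supp(A_1)|$ and $\mathsf h(A_1)$ using powers $A_1^{\lceil n/2\rceil}$, $A_1^{\approx n/3}$) to reach a contradiction. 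None of this — i.e.\ the entire hard part of the proof — has a counterpart in your outline, so as it stands the proposal does not prove the lemma.
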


\begin{proof}
Assume to the contrary that $\min \Delta (G_0) \ge  \max \{2, n-3\}$.
By \cite[Corollary 3.1]{Sc05d}, the existence of the subset $G_2$ implies that $\mathsf k (U) \in \N$ for each $U \in \mathcal A (G_0)$ and
\[
\min \Delta (G_0) \t \gcd \big( \{ \mathsf k (A) - 1 \mid A \in \mathcal A (G_0) \} \big) \,.
\]
We set
\[
W_1=\{A\in \mathcal A (G_0) \mid \mathsf k (A)=1\} \quad \text{and} \quad
W_2=\{A\in \mathcal A (G_0) \mid \mathsf k (A)>1\} \,.
\]
Then it follows that, for each $U_1, U_2 \in W_2$,
\begin{equation}\label{e2}
\mathsf k (U_1)\ge \max \{3, n-2\} \quad \text{and} \quad \big(\text{either} \ \mathsf k (U_1)=\mathsf k (U_2) \ \text{or} \ |\mathsf k (U_1)-\mathsf k (U_2)|\ge  \max \{2, n-3\} \big) \,.
\end{equation}

We choose an element $U \in W_2$. Then $\supp (U) = G_0$, and we pick an element $g\in G_0\setminus G_2$. Then $g\in \langle G_2 \rangle$ and, by Lemma \ref{3.4}.1,  there is an atom $A$ with
$\mathsf v_g(A)=1$ and $\supp(A)\subset G_2 \cup \{g\}\subsetneq G_0$. This implies that  $A\in W_1$,
and $$U  A^{\ord(g)-\mathsf v_g(U)}=g^{\ord(g)}  S$$
for some zero-sum sequence $S$ over $G$. Since
$\supp(S)=G_0\setminus\{g\}$ and $G_0$ is minimal non-half-factorial, $S$ has a factorization  into a product of atoms from $W_1$.
Therefore,  for each $U\in W_2$,  there are $A_1,  \ldots , A_m \in W_1$,
where $m \le \ord(g)-\mathsf v_g(U)\le n-1$,  such that $UA_1 \cdot \ldots \cdot A_m$ can be factorized into a product of atoms from $W_1$.

We set
\[
W_0=\{A\in \mathcal A (G_0) \mid \mathsf k (A)= \min \{ \mathsf k (B) \mid B \in W_2\} \}\subset W_2 \,,
\]
 and we consider all tuples $(U,A_1,\ldots,A_m)$, where $U\in W_0$ and $A_1,\ldots,A_m\in W_1$,  such that $UA_1 \cdot \ldots \cdot A_m$ can be factorized into a product of atoms from $W_1$. We fix one such tuple $(U,A_1,\ldots,A_m)$ with the property that $m$ is minimal possible.
Note  that $m\le n-1$.
Let
\begin{equation}\label{e3}
UA_1 \cdot \ldots \cdot A_m = V_1 \cdot \ldots \cdot V_t \quad \text{ with} \quad t \in \N \quad \text{and} \quad  V_1, \ldots, V_t \in W_1 \,.
\end{equation}
We observe that  $\mathsf k(U)=t-m$ and continue with the following assertion.

\begin{enumerate}
\item[{\bf A1.}\,] For each $\nu \in [1, t]$, we have $V_{\nu} \nmid UA_1 \cdot \ldots \cdot A_{m-1}$.
\end{enumerate}

\smallskip
{\it Proof of \,{\bf A1}}.\ Assume to the contrary that there is such a $\nu \in [1, t]$, say $\nu = 1$, with  $V_1 \t U A_1 \cdot \ldots \cdot A_{m-1}$. Then there are $l \in \N$ and $T_1, \ldots, T_l \in \mathcal A (G_0)$ such that
\[
U A_1 \cdot \ldots \cdot A_{m-1} = V_1 T_1 \cdot \ldots \cdot T_{\mathit l} \,.
\]
By the minimality of $m$,  there exists some $\nu \in [1, l]$ such that $T_{\nu} \in W_2$, say $\nu=1$.  Since
\[
\sum_{\nu=2}^l \mathsf k (T_{\nu}) = \mathsf k (U) + (m-1)-1 - \mathsf k (T_1) \le m-2 \le n-3 \,,
\]
and $\mathsf k (T')\ge n-2$ for all $T' \in W_2$, it follows that  $T_2, \ldots, T_l \in W_1$, whence $l = 1 + \sum_{\nu=2}^l \mathsf k (T_{\nu}) \le m-1$.
We obtain that
\[
V_1 T_1 \cdot \ldots \cdot T_{\mathit l}A_m = U A_1 \cdot \ldots \cdot A_m = V_1 \cdot \ldots \cdot V_t \,,
\] and thus
 \[
 T_1 \cdot \ldots \cdot T_{\mathit l}A_m = V_2 \cdot \ldots \cdot V_t \,.
\]
The minimality of $m$ implies that $\mathsf k(T_1)> \mathsf k(U)$.
It follows that  \[\mathsf k(T_1)-\mathsf k(U)=m-1-{\mathit l}\le m-2\le n-3\le \max\{n-3,2\}\le \mathsf k(T_1)-\mathsf k(U).\] Therefore  $l=1$, $m=n-1$,  $n\ge 5$ and $\mathsf k(T_1)=\mathsf k(U)+ n-3$.
Thus
\[
T_1A_{n-1}=V_2 \cdot \ldots \cdot V_t  \,, \quad \text{and hence} \quad t-1\le |A_{n-1}| \,.
\]
This equation shows  that $\mathsf k(T_1)=t-2\le |A_{n-1}|-1\le n-1$, and hence $n-2\le \mathsf k(U)=\mathsf k(T_1)-n+3\le 2$,
 a contradiction to $n\ge 5$. \qed{(Proof of {\bf A1})}

\medskip

Since $\exp (G)=n$ and $\mathsf k (A_m)=1$, it follows that $|A_m| \le n$.
By {\bf A1}, for each $\nu \in [1, t]$ there exists an element $h_{\nu} \in \supp (A_m)$ such that
\[
\mathsf v_{h_{\nu}} (V_{\nu}) > \mathsf v_{h_{\nu}} (U A_1 \cdot \ldots \cdot A_{m-1}) \,.
\]
For each $h \in \supp (A_m)$ we define
\[
F_h = \{ \nu \in [1,t] \ \mid \ \mathsf v_{h} (V_{\nu}) > \mathsf v_{h} (U A_1 \cdot \ldots \cdot A_{m-1}) \} \subset [1,t] \,.
\]
Thus
\[
\bigcup_{h\in \supp(A_m)}F_h=[1,t]\, \]
and for each $h \in \supp (A_m)$, we have \[   \mathsf v_h (A_m) + \mathsf v_h (UA_1 \cdot \ldots \cdot A_{m-1})=\sum_{i=1}^{t}\mathsf v_h(V_i)\ge \sum_{i\in F_h} \mathsf v_h(V_i)\ge |F_h|\big(\mathsf v_h (UA_1 \cdot \ldots \cdot A_{m-1})+1\big)  \,.
\]
Since $|A_m|>|\supp(A_m)|$ (otherwise, it would follow that  $A_m \t U$, a contradiction), we obtain that
\[
\begin{aligned}
t & = \Big| \bigcup_{h \in \supp (A_m)} F_h \Big| \le \sum_{h} |F_h|
  \le \sum_{h} \frac{\mathsf v_h (A_m) + \mathsf v_h (UA_1 \cdot \ldots \cdot A_{m-1})}{\mathsf v_h (UA_1 \cdot \ldots \cdot A_{m-1})+1} \\
  & \le \sum_{h} \frac{\mathsf v_h (A_m)+1}{2} = \frac{|A_m|}{2}+\frac{|\supp(A_m)|}{2} < |A_m| \le n \,.
\end{aligned}
\]

By Equations \eqref{e3} and \eqref{e2}, we have $\max\{3,n-2\}\le \mathsf k(U)=t-m\le n-1-m$ and hence $m=1$, $n\ge 5$, $t=n-1$, and $\mathsf k(U)=n-2$. Therefore
\begin{equation}\label{e4}UA_1 = V_1 \cdot \ldots \cdot V_{n-1},\ \,  |A_1|=n,\ \, n-2\le|\supp(A_1)|\le n-1 \ \,,
\end{equation}
and
\begin{equation}\label{e6} \sum_{h\in \supp(A_1)} |F_h|=n-1,\quad  \text{and the sets $F_h, h\in \supp(A_1)$ are pairwise disjoint.}
\end{equation}
Furthermore, $ |F_h|\le\frac{\mathsf v_h (A_1) + \mathsf v_h (U)}{\mathsf v_h (U)+1}$ for each $h\in \supp(A_1)$. Then for each $h\in \supp(A_1)$, we have that
\begin{equation}\label{e5}
|F_h|\le 1 \quad\text{ when } \mathsf v_h(A_1)\le 2\, \quad\text{ and }\,\quad  |F_h|\le 2 \quad \text{ when } \mathsf v_h(A_1)\le 4\,.
\end{equation}

\smallskip
Now we consider all atoms $A_1\in W_1$ such that $U A_1$ can be factorized into a product of $n-1$ atoms from $W_1$, and among them  the atoms $A_1'$ for which  $|\supp(A_1')|$ is minimal, and among them we choose an atom $A_1''$ for which   $\mathsf h(A_1'')$ is minimal. Changing notation if necessary we suppose that $A_1$ has this property. By Equation \eqref{e4}, we distinguish three cases depending on $|\supp(A_1)|$ and $\mathsf h(A_1)$.

\smallskip
\noindent
CASE 1: \  $|\supp(A_1)|= n-1$.

Let $\supp(A_1)=\{g_1,\ldots,g_{n-1}\}$ and $A_1=g_1^2g_2\cdot \ldots \cdot g_{n-1}$. Since $\mathsf h(A_1)=2$,  Equations \eqref{e5} and \eqref{e6} imply that  $|F_h|=1$  for each $h\in \supp(A_1)$.
Note that $Ug_1^2g_2 \cdot \ldots \cdot  g_{n-1}=V_1 \cdot \ldots \cdot V_{n-1}$. After renumbering if necessary we may suppose that $F_{g_i} = \{i\}$ for each $i \in [1, n-1]$. Therefore, we have $\mathsf v_{g_i} (V_i) > \mathsf v_{g_i} (U) \ge 1$ for each $i \in [1, n-1]$. Hence $\mathsf v_{g_1}(V_1) \ge 2$ and we set $V_1=g_1^2Y_1$ for some $Y_1$ dividing $U$. Thus $UY_1^{-1}g_2 \cdot \ldots \cdot g_{n-1} = V_2 \cdot \ldots \cdot V_{n-1}$ which  implies that $V_i=g_iY_i$, for $i \in [2,n-1]$,  where $Y_2 \cdot \ldots \cdot Y_{n-1}=UY_1^{-1}$.
Summing up we have
\begin{equation}\label{e7}
U=Y_1\cdot\ldots\cdot Y_{n-1} \text{ such that }
V_i= g_iY_i \, \text{ for }\, i\in [2,\, n-1]\, \text{ and }\, V_{1}= g_1^2Y_1.
\end{equation}

If $n$ is even and $X \in \mathcal A (G)$ such that $X \t A_1^{n/2}$, then $\mathsf k (X) \le (n/2) \mathsf k (A_1) = n/2 < n-2$ whence $X \in W_1$ and  $\mathsf k (X)=1$. This shows that  $\mathsf L (A_1^{n/2})= \{n/2\}$.
Similarly, if $n$ is odd, then $\mathsf L (A_1^{(n+1)/2}) = \{(n+1)/2\}$.
Therefore,\[A'=\left\{
\begin{aligned}
A_1^{\frac{n}{2}}&=g_1^n  g_2^{\frac{n}{2}}\cdot \ldots \cdot g_{n-1}^{\frac{n}{2}} \quad \text{can only be written as a product of $n/2$ atoms if $n$ is even, }\\
A_1^{\frac{n+1}{2}}&=g_1^n g_1g_2^{\frac{n+1}{2}}\cdot \ldots \cdot g_{n-1}^{\frac{n+1}{2}} \quad \text{can only be written as product of $(n+1)/2$ atoms if $n$ is odd}\,.
\end{aligned}\right.
\]
Thus we can find an atom $C\t A'(g_1^n)^{-1}$ with $\supp(C)\subset \{g_2,\ldots,g_{n-1}\}$ and $|\supp(C)|\ge 2$, say $g_2,g_3\in \supp(C)$.  Therefore, we obtain that $V_2V_3=g_2g_3Y_2Y_3\t UC$, say $UC = V_2 V_3V'$ for some $V' \in \mathcal B (G)$. Since
 \[
 \mathsf k (UC)=\mathsf k (U) + \mathsf k (C) = n-1=\mathsf k (V_2)+\mathsf k (V_3)+\mathsf k (V') \,,
 \]
 we obtain that $\mathsf k (V')=n-3$. Now Equation \eqref{e2} implies that $V'$ is a product of atoms from $W_1$,
 and hence $UC$ can be factorized into a product of $n-1$ atoms. Since $|\supp(C)|<n-1=|\supp(A_1)|$, this is a contradiction to the choice of $A_1$.

\smallskip
\noindent
CASE 2: \  $|\supp(A_1)|= n-2$ and  $\mathsf h(A_1)=2$.

Let $\supp(A_1)=\{g_1,\ldots,g_{n-2}\}$ and $A_1=g_1^2g_2^2g_3\cdot \ldots \cdot g_{n-2}$. Since $\mathsf h(A_1)=2$,  Equation \eqref{e5} implies that  $|F_h|\le 1$ for each $h\in \supp(A_1)$. Thus $\sum_{h\in \supp(A_1)} |F_h|\le n-2$, a contradiction to Equation \eqref{e6}.

\smallskip
\noindent
CASE 3: \ $|\supp(A_1)|= n-2$ and  $\mathsf h(A_1)=3$.

Let $\supp(A_1)=\{g_1,\ldots,g_{n-2}\}$ and $A_1=g_1^3g_2 \cdot \ldots \cdot g_{n-2}$. Since $\mathsf h(A_1)=3$,  the Equations \eqref{e5} and \eqref{e6} imply that  $|F_{g_1}|=2$ and $|F_{g_i}|=1$ for each $i\in [2,\,n-2]$.
Note that $Ug_1^3g_2 \cdot \ldots \cdot g_{n-2}=V_1\cdot \ldots \cdot V_{n-1}$. After renumbering if necessary we may suppose that $F_{g_1}=\{1, n-1\}$ and $F_{g_i}=\{i\}$ for each $i \in [2, n-2]$. Therefore we have $\mathsf v_{g_i} (V_i) > \mathsf v_{g_i}(U) \ge 1$ for each $i \in [1, n-2]$ and $\mathsf v_{g_1}(V_{n-1})>\mathsf v_{g_1}(U)\ge 1$. Hence we may set $V_{n-1}=g_1^2Y_{n-1}$ for some $Y_{n-1}$ dividing $U$. Thus $UY_{n-1}^{-1}g_1g_2 \cdot \ldots \cdot g_{n-2}=V_1 \cdot \ldots \cdot V_{n-2}$ which implies that $V_i=g_iY_i$ for each $i \in [1, n-2]$ where $Y_1 \cdot \ldots \cdot Y_{n-2} = UY_{n-1}^{-1}$. Summing up we have
\begin{equation}\label{e8}
U=Y_1\cdot\ldots\cdot Y_{n-1} \text{ such that } V_i= g_iY_i \, \text{ for }\, i\in [1,\, n-2]\, \text{ and }\, V_{n-1}= g_1^2Y_{n-1}.
\end{equation}

 As in CASE 1 we obtain that (note $n\ge 5$)
 \[A'=\left\{
 \begin{aligned}
A_1^{\frac{n}{3}}&=g_1^n  g_2^{\frac{n}{3}}\cdot \ldots \cdot  g_{n-2}^{\frac{n}{3}} \quad\ \ \ \, \text{can only be written as a product of $\,\frac{n}{3}$ atoms if} \   n\equiv 0\mod{3}\\
A_1^{\frac{n+1}{3}}&=g_1^n  g_1g_2^{\frac{n+1}{3}}\cdot \ldots \cdot g_{n-2}^{\frac{n+1}{3}} \  \text{can only be written as a product of $\,\frac{n+1}{3}$ atoms if} \   n\equiv 2\mod{3}\\
A_1^{\frac{n+2}{3}}&=g_1^n  g_1^2 g_2^{\frac{n+2}{3}} \cdot \ldots \cdot g_{n-2}^{\frac{n+2}{3}} \  \text{can only be written as a product of $\,\frac{n+2}{3}$ atoms if}\  n\equiv 1\mod{3}\,.
\end{aligned}\right.\]
Let $C \in \mathcal A (G)$ be an atom dividing $A'(g_1^n)^{-1}$. Then $\supp(C)\subset \{g_1,\ldots,g_{n-2}\}$ and $|\supp(C)|\ge 2$, say $g_i,g_j\in \supp(C)$ where $1\le i<j\le n-2$. Therefore, we obtain that $V_iV_j=g_ig_jY_iY_j\t UC$ by Equation \eqref{e8}. Arguing as in CASE 1 we infer that  $UC$ is a product of $n-1$ atoms from $W_1$.
By the choice of $A_1$, we obtain that $|\supp(C)|=n-2$ and $\mathsf h(C)\ge 3$. Since this holds for all atoms dividing $A'(g_1^n)^{-1}$, we obtain  a contradiction to the structure of $A'$.
\end{proof}

\medskip
\begin{proof}[Proof of Theorem \ref{1.1}]
Let $H$ be a Krull monoid with class group $G$ and let $G_P \subset G$ denote the set of classes containing prime divisors.
If $|G| \le 2$, then $H$ is half-factorial by \cite[Corollary 3.4.12]{Ge-HK06a}, and thus $\Delta^* (H) \subset \Delta (H) = \emptyset$.
If $G$ is infinite and $G_P =G$, then $\Delta^* (H) = \mathbb N$ by \cite[Theorem 1.1]{Ch-Sc-Sm08b}.

Suppose that  $2 < |G| < \infty$. By Lemma \ref{2.1}, it suffices to prove the statements for the Krull monoid $\mathcal B (G_P)$.
If $G$ is finite, then $\Delta (G)$ is finite by \cite[Corollary
3.4.13]{Ge-HK06a}, hence $\Delta^* (G)$ is finite, and
Lemma \ref{3.1} shows that $\{ \exp (G)-2, \mathsf r (G)-1\} \subset \Delta^* (G)$.

Since $\Delta^* (G_P) \subset \Delta^* (G)$, it remains to prove that
\[
\max \Delta^* (G) \le \max \{ \exp (G)-2, \mathsf r (G)-1\} \,.
\]

Let $G_0 \subset G$ be a  non-half-factorial subset, $n=\exp(G)$, and $r=\mathsf r(G)$.  We  need to prove that $\min \Delta (G_0) \le \max \{ n-2, r-1\}$.
If $G_1 \subset G_0$ is non-half-factorial, then $\min \Delta (G_0) = \gcd \Delta (G_0) \t \gcd \Delta (G_1) = \min \Delta (G_1)$. Thus we may suppose that $G_0$ is minimal non-half-factorial.
If there is an $U \in \mathcal A (G_0)$ with $\mathsf k (U)<1$, then Lemma \ref{3.1}.3 implies that $\min \Delta (G_0) \le n-2$.  Suppose that $\mathsf k (U) \ge 1$ for all $U \in \mathcal A (G_0)$, i.e, $G_0$ is an LCN-set.
Since $G_0$ is minimal non-half-factorial, it follows that $G_0$ is indecomposable by Lemma \ref{3.3}. By Lemma \ref{3.4}.3, we may suppose that for each $g \in G_0$ we have $g \in \langle G_0 \setminus \{g\} \rangle$.
Suppose that the order of each element of $G_0$ is a prime power. Since $G_0$ is indecomposable, Lemma \ref{3.3} implies that each order is a power of a fixed prime $p \in \mathbb P$, and thus $\langle G_0 \rangle$ is a $p$-group. By Proposition \ref{3.2} we infer that
\[
\min \Delta (G_0) \le \max \Delta^* ( \langle G_0 \rangle) = \max \{ \exp ( \langle G_0 \rangle ) - 2, \mathsf r ( \langle G_0 \rangle)-1 \} \le \max \{ n-2, r-1 \} \,.
\]
From now on we suppose that there is an element $g \in G_0$ whose order is not a prime power. Then $n \ge 6$.
If $|G_0| \le r+1$, then $\min \Delta (G_0) \le |G_0|-2\le r-1$ by Lemma \ref{3.1}.3. Thus we may suppose that  $|G_0| \ge r+2$ and we distinguish two cases.

\smallskip
\noindent
CASE 1: There exists a subset $G_2 \subset G_0$ such that $\langle G_2 \rangle = \langle G_0 \rangle$ and $|G_2| \le |G_0|-2$.

Then Lemma \ref{3.6} implies that $\min \Delta (G_0) \le  n-4 \le n-2$.

\smallskip
\noindent
CASE 2: Every subset $G_1\subset G_0$ with $|G_1|=|G_0|-1$ is a minimal generating set of $\langle G_0 \rangle $.

Then for each $h\in G_0$, $G_0\setminus \{h\}$ is half-factorial and $h \notin \langle G_0 \setminus \{h, h' \} \rangle \ \text{for any }\ h'\in G_0\setminus\{h\}$. Thus Lemma \ref{3.5} implies that $|G_0|\le n-2 $ and hence $\min \Delta (G_0) \le |G_0|-2\le n-4 \le n-2$ by Lemma \ref{3.1}.3.
\end{proof}

\medskip
\section{Inverse results on $\Delta^* (H)$} \label{4}
\medskip

Let $G$ be a finite abelian group. In this section we study the structure of minimal non-half-factorial subsets $G_0 \subset G$ with $\min \Delta (G_0) = \max \Delta^* (G)$.
These structural investigations were started by Schmid who obtained  a characterization in case $\exp (G)-2 > \mathsf m (G)$ (Lemma \ref{4.1}.1). Our main result in this section is Theorem \ref{4.5}. All examples of minimal non-half-factorial subsets $G_0 \subset G$ with $\min \Delta (G_0) = \max \Delta^* (G)$ known so far are simple, and the standing conjecture was that all such sets are simple. We provide the  first example of such a set $G_0$ which is not simple (Remark \ref{4.6}).

\medskip
\begin{lemma} \label{4.1}
Let $G$ be a finite abelian group with $|G| > 2$, $\exp (G)=n$, $\mathsf r (G)=r$,  and let $G_0 \subset G$ be a subset with $\min \Delta (G_0) = \max \Delta^* (G)$.
\begin{enumerate}
\item Suppose that $\mathsf m (G) < n-2$. Then $G_0$ is indecomposable if and only $G_{0}=\{g, -g\}$ for some $g\in G$ with $\ord (g) = n$.

\item Suppose that $r \le n-1$. Then  $G_0$ is minimal non-half-factorial but not an {\rm LCN}-set if and only if  $G_{0}=\{g, -g\}$ for some $g\in G$ with $\ord (g) = n$.
\end{enumerate}
\end{lemma}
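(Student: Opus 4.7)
For the ``if'' direction in both parts, take $G_0 = \{g, -g\}$ with $\ord(g) = n$. Then $\mathcal{A}(G_0) = \{g^n, (-g)^n, g(-g)\}$, and the identity $g^n(-g)^n = (g(-g))^n$ gives $\min \Delta(G_0) = n-2$ as in Lemma~\ref{3.1}.1; $\mathsf{k}(g(-g)) = 2/n < 1$ shows $G_0$ is not an LCN-set; every proper subset is a singleton so $G_0$ is minimal non-half-factorial; and the only candidate partition $\{g\} \uplus \{-g\}$ fails to be a direct sum since $\langle g\rangle \cap \langle -g\rangle = \langle g\rangle \ne \{0\}$, so $G_0$ is indecomposable. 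Under the hypothesis of part~1, Proposition~\ref{3.2} gives $\max \Delta^*(G) = n-2$; under the hypothesis of part~2, Theorem~\ref{1.1} combined with Lemma~\ref{3.1}.1 yields the same conclusion. Thus $\min \Delta(G_0) = \max \Delta^*(G)$ in both cases.

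For the ``only if'' of part~2, my plan is as follows. Since $r \le n-1$, Theorem~\ref{1.1} and Lemma~\ref{3.1}.1 force $\max \Delta^*(G) = n-2$, so $\min \Delta(G_0) = n-2$. As $G_0$ is not LCN, pick $U \in \mathcal{A}(G_0)$ with $\mathsf{k}(U) < 1$; Lemma~\ref{3.1}.3(a) yields $n\mathsf{k}(U) \le 2$, while $\mathsf{k}(U) \ge |U|/n \ge 2/n$ because every $\ord(g_i) \le n$ and $|U| \ge 2$. Equality throughout forces $|U|=2$, both elements of order $n$, and sum zero; hence $U = g(-g)$ with $\ord(g)=n$. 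Then $\{g,-g\} \subset G_0$ is itself non-half-factorial, so minimality of $G_0$ forces $G_0 = \{g,-g\}$.

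For the ``only if'' of part~1 I would first reduce to the minimal non-half-factorial case. Proposition~\ref{3.2} together with $\mathsf{m}(G) < n-2$ yields $\max \Delta^*(G) = n-2$, so $\min \Delta(G_0) = n-2$. Pass to a minimal non-half-factorial $G_1 \subset G_0$; from $\Delta(G_1) \subset \Delta(G_0)$ and the identity $\min \Delta = \gcd \Delta$ one gets $\min \Delta(G_1) = n-2$, and $G_1$ cannot be LCN (otherwise $\min \Delta(G_1) \le \mathsf{m}(G) < n-2$). Repeating the part~2 argument, $G_1 = \{g, -g\}$ with $\ord(g) = n$. Supposing for contradiction that $G_0 \supsetneq G_1$, Lemma~\ref{3.3}.1 applied to $G_0 = G_1 \uplus (G_0 \setminus G_1)$ combined with the indecomposability of $G_0$ produces an atom $V \in \mathcal{A}(G_0)$ whose support meets both sides; since $g(-g)$ is itself an atom, $V$ cannot be divisible by $g(-g)$, so up to swapping $g \leftrightarrow -g$ we may write $V = g^a W$ with $a \in [1, n-1]$, $W \in \mathcal{F}(G_0 \setminus \{g,-g\})$ nonempty and zero-sum-free, and $\sigma(W) = -ag$.

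The main obstacle is to convert the existence of $V$ into a distance strictly smaller than $n-2$. My plan is to exploit the zero-sum element $B = V \cdot (g(-g))^{n-a} = g^n (-g)^{n-a} W$ in $\mathcal{B}(G_0)$, which admits the factorization $V \cdot (g(-g))^{n-a}$ of length $n - a + 1$ and the alternative $g^n \cdot \widetilde{W}$ of length $1 + \ell$ for any $\ell \in \mathsf{L}(\widetilde{W})$, where $\widetilde{W} = (-g)^{n-a}W$ is zero-sum. Because $W$ is zero-sum-free, every atom of $\widetilde{W}$ consumes at least one $-g$, so $\ell \le n - a$, and the difference $n - a - \ell \in [0, n-a-1]$ is $0$ or a multiple of $n - 2$; for $a \ge 2$ this forces $\mathsf{L}(\widetilde{W}) = \{n - a\}$, meaning every factorization of $\widetilde{W}$ uses exactly one $-g$ per atom and partitions $W = W_1 \cdots W_{n-a}$ with each $\sigma(W_i) = g$. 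The hard step is to leverage this rigidity — together with the indecomposability of $G_0$ (which supplies further bridge atoms) and the interplay with $V$, $g^n$, and $(-g)^n$ — to exhibit a second zero-sum element with two factorization lengths differing by less than $n-2$; the case $a = 1$ is handled by the symmetric construction with $g$ and $-g$ interchanged.
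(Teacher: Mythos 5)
Your treatment of part~2 is correct and is essentially the paper's own argument: from $r\le n-1$ one gets $\min\Delta(G_0)=n-2$, and for an atom $U$ with $\mathsf k(U)<1$ the two lengths $n,\,n\mathsf k(U)\in\mathsf L(U^n)$ together with $\min\Delta(G_0)=\gcd\Delta(G_0)=n-2$ and $\mathsf k(U)\ge 2/n$ force $\mathsf k(U)=2/n$, hence $U=g(-g)$ with $\ord(g)=n$, and minimality gives $G_0=\{g,-g\}$. The ``if'' directions (including indecomposability of $\{g,-g\}$ and the identification $\max\Delta^*(G)=n-2$ via Proposition~\ref{3.2}, resp.\ Theorem~\ref{1.1} and Lemma~\ref{3.1}.1) are also fine.

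The genuine gap is in the ``only if'' direction of part~1. Your reduction (pass to a minimal non-half-factorial $G_1\subset G_0$, use $\min\Delta(G_0)\mid\min\Delta(G_1)\le\max\Delta^*(G)$ and $\mathsf m(G)<n-2$ to conclude $G_1=\{g,-g\}$, then use indecomposability to produce a bridging atom $V=g^aW$) is sound as far as it goes, and the rigidity statement $\mathsf L\big((-g)^{n-a}W\big)=\{n-a\}$ for $a\ge 2$ is correctly derived. But the decisive step --- turning this rigidity into a zero-sum element whose set of lengths contains a distance strictly smaller than $n-2$, thereby contradicting $\min\Delta(G_0)=n-2$ when $G_0\supsetneq\{g,-g\}$ --- is exactly what you leave open (``the hard step is to leverage this rigidity \dots''), and it is the substantive content of the statement: the paper does not reprove it but invokes Schmid's result \cite[Theorem 5.1]{Sc08d}, whose proof is a nontrivial analysis and not a routine extension of your setup. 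As written, part~1 is therefore a plan rather than a proof; to close it you would either have to carry out that inverse-type argument in full or, as the paper does, cite \cite[Theorem 5.1]{Sc08d}.
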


\begin{proof}
1. See \cite[Theorem 5.1]{Sc08d}.

\smallskip
2. Since $n=2$ implies $r=1$ and $|G|=2$, it follows that $n \ge 3$. By Theorem \ref{1.1}, we have that $\min \Delta(G_0)=n-2$. Obviously, the set $\{-g, g\}$, with $g \in G$ and $\ord (g)=n$, is a minimal non-half-factorial set with $\min \Delta (\{-g,g\})=n-2$ but not an LCN-set. Conversely, let $G_0$ be  minimal non-half-factorial but not an {\rm LCN}-set.
Then there exists an  $A \in \mathcal A (G_0)$ with $\mathsf k(A)<1$.  Since $\{n, n\mathsf k(A^n)\}\subset \mathsf L(A^n)$, it follows that  $n-2 \t n(\mathsf k(A)-1)$ whence $\mathsf k(A)=\frac{2}{n}$. Consequently, $A=(-g)g$ for some $g$ with $\ord(g)=n$. Thus $\{-g, g\}\subset G_0$, and since $G_0$ is minimal non-half-factorial, equality follows.
\end{proof}

\medskip
\begin{lemma} \label{4.2}
Let $G$ be a finite abelian group  with $\exp(G)=n$, $\mathsf r (G)=r$, and let $G_0 \subset G$ be a minimal non-half-factorial {\rm LCN}-set with $\min  \Delta (G_0)=\max \Delta^* (G) $.
\begin{enumerate}
\item  Then $|G_0|=r+1$, $r\ge n-1$ and for each $h\in G_0$,  $h\not\in \langle G_0\setminus \{h,\ h'\}\rangle $ for any $h'\in G_0\setminus \{h\}$.

\smallskip
\item If  $r \le n-2$, then $\mathsf m(G) \le n-3$.

\smallskip
\item If $n \ge 5$ and $r \le n-3$ then $\mathsf m (G) \le n-4$.
\end{enumerate}
\end{lemma}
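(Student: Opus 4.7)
The plan is to establish Part 1 first and then to derive Parts 2 and 3 by rerunning the same line of reasoning under weakened hypotheses. Combining the hypothesis with Theorem \ref{1.1} and Lemma \ref{3.1}, one has
\[
\min \Delta (G_0) \;=\; \max \Delta^* (G) \;=\; \max \{n-2,\, r-1\} \,,
\]
and since $G_0$ is an LCN-set, Lemma \ref{3.1}.3(b) yields $|G_0| \ge \min \Delta (G_0)+2 \ge \max\{n,\, r+1\}$. Using Lemma \ref{3.4}.3 I may pass to a set of the same cardinality (calling it again $G_0$) for which $g \in \langle G_0 \setminus \{g\} \rangle$ for every $g\in G_0$; items (c) and (d) of that lemma will guarantee that the structural conclusions we extract transfer back to the original set.

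The first key move is the contrapositive of Lemma \ref{3.6}. Because $\min \Delta (G_0) \ge n-2 > \max\{1,\, n-4\}$ (the trivial small-$n$ cases being handled separately), there cannot exist a subset $G_2 \subset G_0$ with $\langle G_2 \rangle = \langle G_0 \rangle$ and $|G_2| \le |G_0|-2$. Equivalently, for every $h' \in G_0$ the set $G_0 \setminus \{h'\}$ is a minimal generating set of $\langle G_0 \rangle$, which is exactly the irredundancy condition $h \notin \langle G_0 \setminus \{h,h'\}\rangle$ asserted in Part 1.

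I then split into two cases. If some element of $G_0$ has non-prime-power order, then the irredundancy just obtained supplies condition (a) of Lemma \ref{3.5} while (b) holds by choice, and Lemma \ref{3.5} yields $|G_0| \le n-2$ as soon as $|G_0| \ge r+2$; since $|G_0| \ge n$, this forces $|G_0| \le r+1$, and together with $|G_0| \ge r+1$ gives $|G_0|=r+1$ and $r \ge n-1$. If instead every element of $G_0$ has prime-power order, the indecomposability of a minimal non-half-factorial set (Lemma \ref{3.3}.2) forces all these orders to be powers of a single prime $p$, so $\langle G_0 \rangle$ is a finite abelian $p$-group. Proposition \ref{3.2} then gives $\min \Delta (G_0) \le \mathsf m (\langle G_0 \rangle) = \mathsf r (\langle G_0 \rangle)-1 \le r-1$, and comparison with $\min \Delta (G_0) = \max\{n-2, r-1\}$ forces $r \ge n-1$; the Burnside basis theorem for finite abelian $p$-groups applied to the minimal generating set $G_0 \setminus \{h'\}$ gives $|G_0|-1 = \mathsf r (\langle G_0 \rangle) \le r$, so $|G_0|=r+1$ in this case as well.

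For Part 2, suppose $\mathsf m (G) \ge n-2$; then $\max \Delta^* (G) = \mathsf m (G)$, and after passing to a minimal non-half-factorial LCN subset of any witnessing set (here one uses that $\min \Delta$ can only grow, while $\mathsf m(G)$ is a maximum, so equality persists) we obtain a set satisfying the hypothesis of Part 1, whence $r \ge n-1$, contradicting $r \le n-2$. For Part 3 I pick a minimal non-half-factorial LCN $G_0$ with $\min \Delta (G_0) = \mathsf m (G) \ge n-3$ and rerun the argument above with this weaker lower bound: since $n \ge 5$ we still have $n-3 > n-4$, so Lemma \ref{3.6} again yields irredundancy; the estimate $|G_0| \ge n-1 \ge r+2$ lets us invoke Lemma \ref{3.5} to contradict $|G_0|\ge n-1$ in the non-prime-power-order case, while in the prime-power-order case Proposition \ref{3.2} gives $\min \Delta (G_0) \le \mathsf r (\langle G_0 \rangle)-1 \le r-1 \le n-4$, a contradiction. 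The main technical obstacle is verifying that the reduction via Lemma \ref{3.4}.3 genuinely transports the irredundancy and cardinality conclusions back to the original $G_0$, and that the dichotomy (non-prime-power vs.\ prime-power order) terminates cleanly on each branch; once this bookkeeping is in place, Lemmas \ref{3.5}--\ref{3.6} and Proposition \ref{3.2} close out the argument.
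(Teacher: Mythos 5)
Your proposal follows essentially the same route as the paper: the identities $\min\Delta(G_0)=\max\{n-2,r-1\}$ and $|G_0|\ge \min\Delta(G_0)+2$ from Theorem \ref{1.1} and Lemma \ref{3.1}.3, the reduction via Lemma \ref{3.4}.3, the dichotomy ``small generating subset $G_2$ exists (Lemma \ref{3.6}) versus every $(|G_0|-1)$-subset is a minimal generating set,'' and then Lemma \ref{3.5} resp.\ Proposition \ref{3.2} to force $|G_0|=r+1$ and $r\ge n-1$; Parts 2 and 3 are obtained, as in the paper, by passing to a minimal non-half-factorial LCN witness for $\mathsf m(G)$ and rerunning the argument. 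Your explicit split into the prime-power and non-prime-power branches (with the Burnside-type count $|G_0|-1=\mathsf r(\langle G_0\rangle)\le r$ in the $p$-group case) is sound and in fact makes hypothesis (b) of Lemma \ref{3.5} visibly available exactly where you use it, a point the paper's write-up of Part 1 leaves implicit.

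There is, however, one genuine gap: the step ``$\min\Delta(G_0)\ge n-2>\max\{1,n-4\}$, hence no generating subset $G_2\subset G_0$ with $|G_2|\le|G_0|-2$ exists'' is false as stated when $\max\{n-2,r-1\}=\max\{1,n-4\}=1$, i.e.\ precisely for $G\cong C_2\oplus C_2$ and $G\cong C_3\oplus C_3$ (the only groups with a non-half-factorial LCN-set, $n\le 3$ and $r\le 2$). Your parenthetical ``trivial small-$n$ cases handled separately'' does not supply the missing argument, and for $C_3\oplus C_3$ it is not quite trivial: the paper closes this case by observing that the existence of such a $G_2$ forces $\mathsf k(U)\in\N$ for every $U\in\mathcal A(G_0)$ by \cite[Corollary 3.1]{Sc05d}, which is impossible in $C_3\oplus C_3$ since every atom there has cross number less than $2$, while for $C_3$ and $C_2\oplus C_2$ the exclusion is immediate from cardinality. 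You need either this argument or a direct verification of the conclusion of Part 1 for these two groups; note that Part 3 is unaffected, since there $n\ge 5$ makes your strict inequality $n-3>n-4$ valid.
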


\begin{proof}
1. We have that $\min  \Delta (G_0)\le |G_0|-2$ by Lemma \ref{3.1}.3 and $\min  \Delta (G_0)= \max \{n-2, r-1\}$ by Theorem \ref{1.1}.

By Lemma \ref{3.4}.3 (Properties (a) and (c)), we may assume that for each $g \in G_0$ we have $g \in \langle G_0 \setminus \{g\} \rangle$.

\smallskip
\noindent
CASE 1:  There is a
subset $G_2 \subset G_0$ such that $\langle G_2 \rangle = \langle G_0 \rangle$ and $|G_2| \le |G_0|-2$.

The existence of $G_2$ implies that $G$ is neither isomorphic to $C_3$ nor to $C_2 \oplus C_2$ nor to $C_3 \oplus C_3$ (this is immediately clear for the first two groups; to exclude the case $C_3 \oplus C_3$, use again \cite[Corollary 3.1]{Sc05d} which  says that $\mathsf k (U) \in \N$ for each $U \in \mathcal A (G_0)$).
By Lemma \ref{3.6}, we know that $\min  \Delta (G_0) \le \max \{n-4, 1\}  < \max \{n-2, r-1\} = \min \Delta (G_0)$, a contradiction.

\smallskip
\noindent
CASE 2: Every subset $G_1\subset G_0$ with $|G_1|=|G_0|-1$ is a minimal generating set of $\langle G_0 \rangle $.

Then for each $h\in G_0$,  $h\not\in \langle G_0\setminus \{h,\ h'\}\rangle $ for any $h'\in G_0\setminus \{h\}$.

If $|G_0|\ge r+2$, then by Lemma \ref{3.5} $|G_0|\le n-2$, it follows that $\min  \Delta (G_0)\le |G_0|-2 \le n-4$, a contradiction.

If $|G_0|\le r+1$, then $\max \{n-2, r-1\}=\min  \Delta (G_0)\le |G_0|-2 \le r-1$,
so we must have $|G_0|=r+1$ and $r\ge n-1$.

\smallskip
2.
Assume to the contrary that $r \le n-2$ and that  $\mathsf m(G) \ge n-2$. Then by Theorem \ref{1.1}, $\max \Delta^* (G) = \max \{r-1, n-2\} = n-2$. Since $\mathsf m(G) \ge n-2$, there is  a minimal non-half-factorial {\rm LCN}-set $G_0$ with $\min  \Delta (G_0)=\max \Delta^* (G) $, and then 1. implies that $r \ge n-1$, a contradiction.

\smallskip
3.
Let $G_0 \subset G$ be a  non-half-factorial LCN-subset.  We  need to prove that $\min \Delta (G_0) \le  n-4$. Without restriction we may
suppose that $G_0$ is minimal non-half-factorial which implies that $G_0$ is indecomposable by Lemma \ref{3.3}. By Lemma \ref{3.4}.3, we may suppose that for each $g \in G_0$ we have $g \in \langle G_0 \setminus \{g\} \rangle$.
Suppose that the order of each element of $G_0$ is a prime power. Since $G_0$ is indecomposable, Lemma \ref{3.3} implies that each order is a power of a fixed prime $p \in \mathbb P$, and thus $\langle G_0 \rangle$ is a $p$-group. By Proposition \ref{3.2}, we infer that
\[
\min \Delta (G_0) \le \mathsf m ( \langle G_0 \rangle) =  \mathsf r ( \langle G_0 \rangle)-1  \le  \mathsf r (G)-1  \le n-4 \,.
\]
From now on we suppose that there is an element $g \in G_0$ whose order is not a prime power.
If $|G_0| \le n-2$, then $\min \Delta (G_0) \le |G_0|-2\le  n-4$ by Lemma \ref{3.1}.3. Thus we may suppose that  $|G_0| \ge n-1\ge r+2$ and we distinguish two cases.

\smallskip
\noindent
CASE 1: There exists a subset $G_2 \subset G_0$ such that $\langle G_2 \rangle = \langle G_0 \rangle$ and $|G_2| \le |G_0|-2$.

Then Lemma \ref{3.6} implies that $\min \Delta (G_0) \le  n-4 $.

\smallskip
\noindent
CASE 2: Every subset $G_1\subset G_0$ with $|G_1|=|G_0|-1$ is a minimal generating set of $\langle G_0 \rangle $.

Then for each $h\in G_0$, $G_0\setminus \{h\}$ is half-factorial and $h \notin \langle G_0 \setminus \{h, h' \} \rangle \ \text{for any }\ h'\in G_0\setminus\{h\}$. Thus Lemma \ref{3.5} implies that $|G_0|\le n-2$, a contradiction.
\end{proof}

\medskip
\begin{lemma} \label{4.3}
Let $G$ be a finite abelian group  with $\exp(G)=n$, $\mathsf r (G)=r$, and let $G_0 \subset G$ be a minimal non-half-factorial {\rm LCN}-set with $\min  \Delta (G_0)=\max \Delta^* (G) $.
\begin{enumerate}
\item If $A\in \mathcal A(G_0)$ with $\mathsf k(A)=1$, then $|\supp(A)|\le\frac{n}{2}$.

\smallskip
\item  If $A\in \mathcal A(G_0)$ with $\mathsf k(A)>1$, then $\mathsf k(A)<r$ and $SA^{-1}$ is also an atom where $S=\prod_{g\in G_0}g^{\ord(g)}$.
\end{enumerate}
\end{lemma}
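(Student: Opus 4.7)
The plan is to leverage the structural constraints from Lemma \ref{4.2}, notably $|G_0|=r+1$, $r\ge n-1$, and $\min\Delta(G_0)=r-1$, together with the size restrictions on the length sets $\mathsf L(A^n)$ and $\mathsf L(S)$ that the bound $\min\Delta\ge r-1$ imposes. Throughout, the hypothesis of Lemma \ref{3.4}.2 is available by Lemma \ref{4.2}.1.

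For part 1, my strategy is first to show that $\supp(A)\subsetneq G_0$; once this holds, Lemma \ref{3.4}.2 gives $\mathsf v_g(A)\ge 2$ for every $g\in\supp(A)$, and $1=\mathsf k(A)\ge 2|\supp(A)|/n$ closes the case. To rule out $\supp(A)=G_0$, I use $\mathsf k(A)\ge|G_0|/n=(r+1)/n$ together with $r+1\ge n$ to force equality: $r=n-1$, $\ord(g)=n$ for every $g\in G_0$, and $A=\prod_{g\in G_0}g$ with $\sum_{g\in G_0}g=0$. The main obstacle is then to show that this scenario forces $G_0$ to be half-factorial, contradicting the standing hypothesis. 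For this, take any atom $B$ with $\mathsf k(B)>1$; then $\supp(B)\subsetneq G_0$ (otherwise $BA^{-1}$ would be a proper zero-sum subsequence of the atom $B$), so Lemma \ref{3.4}.2 yields $M:=\max_g\mathsf v_g(B)\ge 2$. Setting $C:=A^M B^{-1}$ gives a nonempty zero-sum sequence with $\supp(C)\subsetneq G_0$; by the minimality of $G_0$, $\supp(C)$ is half-factorial and $\mathsf L(C)=\{M-\mathsf k(B)\}$. The decomposition $A^n=B\cdot A^{n-M}\cdot C$ then shows $n+1-\mathsf k(B)\in\mathsf L(A^n)$. Since $\mathsf L(A^n)\subseteq[2,n]$ contains $n$ and has $\min\Delta\ge n-2$, the set $\mathsf L(A^n)$ has at most two elements, forcing $\mathsf L(A^n)\subseteq\{2,n\}$ and hence $\mathsf k(B)=n-1$. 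But $\mathsf k(B)=n-1$ requires $\sum_g\mathsf v_g(B)=n(n-1)$, contradicting $|\supp(B)|\le n-1$ and $\mathsf v_g(B)\le n-1$.

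For part 2 I first establish $\mathsf k(A)<r$ by casework on $\supp(A)$. If $\supp(A)\subsetneq G_0$, then $|\supp(A)|\le r$; since $\mathsf k(A)>1$ rules out $A=g^{\ord(g)}$, each $\mathsf v_g(A)\le\ord(g)-1$, and hence $\mathsf k(A)<|\supp(A)|\le r$. If $\supp(A)=G_0$, the bound $\mathsf k(A)\le(r+1)(n-1)/n$ combined with $r\ge n-1$ gives $\mathsf k(A)\le r$, with equality forcing $r=n-1$, $\ord(g)=n$ for every $g\in G_0$, and $A=\prod_{g\in G_0}g^{n-1}$. In that extremal case $(n-1)\sum_g g=\sigma(A)=0$ together with $\exp(G)=n$ forces $\sum_g g=0$, so $\prod_g g$ is a proper zero-sum subsequence of $A$, contradicting atomicity.

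The assertion that $SA^{-1}$ is an atom I deduce by analyzing $\mathsf L(S)$. The equality $\mathsf k(S)=|G_0|=r+1$ and the LCN-property give $\max\mathsf L(S)=r+1$ (attained by $S=\prod_{g\in G_0}g^{\ord(g)}$) and $\mathsf L(S)\subseteq[2,r+1]$. If $SA^{-1}$ factors into $t$ atoms, the cross number bound gives $t\le\mathsf k(SA^{-1})=r+1-\mathsf k(A)<r$, and $S=A\cdot(SA^{-1})$ yields $1+t\in\mathsf L(S)\cap[2,r]$. Since $\min\Delta(\mathsf L(S))\ge r-1$ together with $\mathsf L(S)\subseteq[2,r+1]$ force $\mathsf L(S)\subseteq\{2,r+1\}$, the only possibility consistent with $1+t\le r$ is $1+t=2$, giving $t=1$ and $SA^{-1}\in\mathcal A(G_0)$.
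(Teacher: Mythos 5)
Your proposal is correct, but it differs from the paper's proof in a couple of instructive ways. In part 1 the paper rules out the full-support case in one line: since $G_0$ is minimal non-half-factorial and an LCN-set, there is an atom $V$ with $\mathsf k (V)>1$, which necessarily has $\supp (V)=G_0$, and since $|\supp (A)|=n$ forces $\mathsf v_g (A)=1$ for all $g$, the divisibility $A \mid V$ contradicts atomicity. Your detour through $C=A^M B^{-1}$ and the length set $\mathsf L (A^n)$ is valid but superfluous: the moment you know $\supp (B)\subsetneq G_0$, minimality of $G_0$ together with Lemma \ref{2.3} already gives $\mathsf k (B)=1$, the desired contradiction -- this is exactly the fact you invoke for $\supp (C)$ a few lines later, so the length-count can be deleted. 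In part 2 the order of deductions is reversed relative to the paper: the paper first shows $SA^{-1}\in\mathcal A (G_0)$ via $\mathsf L (S)=\{2,r+1\}$ and then obtains $\mathsf k (A)<r$ as a consequence of part 1 applied to $SA^{-1}$ (if $\mathsf k (SA^{-1})=1$ its support would have size at most $n/2$, yet it equals $G_0$ with $|G_0|=r+1\ge n$); you instead prove $\mathsf k (A)<r$ directly from $\mathsf v_g (A)\le \ord (g)-1$, settling the equality case $A=\prod_{g\in G_0}g^{n-1}$ by exhibiting the proper zero-sum subsequence $\prod_{g\in G_0}g$, and only then run essentially the paper's $\mathsf L (S)\subseteq\{2,r+1\}$ argument, needing only $\mathsf k (A)>1$ to force $1+t\le r$. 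Both routes are sound: the paper's is shorter because it reuses part 1, while yours makes the bound $\mathsf k (A)<r$ independent of part 1 at the cost of an extra case analysis.
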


\begin{proof}
By Lemma \ref{4.2}, we have $r \ge n-1$, $|G_0|=r+1$, and for each $h\in G_0$,  $h\not\in \langle G_0\setminus \{h,\ h'\}\rangle $ for any $h'\in G_0\setminus \{h\}$. Let $A \in \mathcal A (G_0)$.

\smallskip
1.   Since $\mathsf k (A)=1$, it follows that $|\supp (A)| \le |A| \le n$. Assume  that $|\supp (A)| = n$. Then $\mathsf v_g (A)=1$ for each $g \in \supp (A)$. Since $G_0$ is a minimal non-half-factorial {\rm LCN}-set, there is a $V \in \mathcal A (G_0)$ with $\mathsf k (V) > 1$ and $\supp (V) = G_0$. Therefore $A \t V$, a contradiction. Thus $|\supp (A)| \le n-1$ whence $\supp (A) \subsetneq G_0$.
  Therefore Lemma \ref{3.4}.2 implies  that $\gcd(\mathsf v_g(A), \ord(g))>1$ for each $g\in \supp(A)$, and hence $|\supp(A)| \le |A|/2 \le n/2$.

\smallskip
2. Let $A \in \mathcal A (G_0)$ with $\mathsf k (A)>1$. Then $A \t S$, $r+1 = |G_0| = \max  \mathsf L (S)$, and $\mathsf L (S) \setminus \{r+1\} \ne \emptyset$. By Theorem \ref{1.1}, we have  $\min \Delta (G_0)=r-1$, hence  $\mathsf L(S)=\{2,r+1\}$, and thus
$SA^{-1}$ is  an atom. If $\mathsf k (SA^{-1})=1$, then 1. implies that $|\supp (SA^{-1}) | \le n/2$, but on the other hand we have $|\supp (SA^{-1})| = |G_0| = r+1 \ge n$, a contradiction.
Therefore we obtain that $\mathsf k(SA^{-1})>1$ and hence
$r+1= \mathsf k (S) = \mathsf k (A) + \mathsf k (SA^{-1})$ implies that
 $\mathsf k(A)<r$.
\end{proof}

\medskip
\begin{lemma} \label{4.4}
Let $G$ be a finite abelian group  with $\exp(G)=n$, $\mathsf r (G)=r$, and let $G_0 \subset G$ be a minimal non-half-factorial {\rm LCN}-set with $\min  \Delta (G_0)=\max \Delta^* (G) $. Let  $g\in G_0$ with $g \in \langle G_0 \setminus \{g\}\rangle$ and $d \in [1, \ord (g)]$ be  minimal  such that $dg\in \langle E^* \rangle$ for some subset $E^* \subsetneq G_0\setminus \{g\}$.  Then  $d \t \ord(g)$, and we have
\begin{enumerate}
\item Let  $k\in[1,\,\ord(g)]$. If $kg \not\in \langle E \rangle$ for any $E\subsetneq G_0\setminus \{g\}$, then there is an atom $A$ with $\mathsf v_g(A)=k$ and $\mathsf k (A)>1$.

\smallskip
\item  Let $k\in[1,\,\ord(g)-1]$ with $d\nmid k$. Then there is an atom $A$ with $\mathsf v_g(A)=k$ and $\mathsf k(A)>1$. In particular, if   $B\in \mathcal B(G_0)$ with $\mathsf v_g(B)=k$ and $B \t \prod_{g\in G_0}g^{\ord(g)}$, then $B$ is an atom.

\smallskip
\item  If $A_1,A_2$ are atoms with $\mathsf v_g(A_1) \equiv \mathsf v_g(A_2)\mod d$, then $\mathsf k(A_1)=\mathsf k(A_2)$.
\end{enumerate}
\end{lemma}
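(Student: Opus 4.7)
The plan is to prove the four assertions in the order: $d \mid \ord(g)$, Part~1, Part~3, and then Part~2 together with its ``in particular'' appendix, relying throughout on Lemmas~\ref{3.4}, \ref{4.2}, and \ref{4.3}. From these I have $|G_0|=r+1$, half-factoriality of each $G_0\setminus\{h\}$, the implication $\mathsf k(A)=1\Rightarrow|\supp(A)|\le n/2$, and the fact that $SA^{-1}$ is an atom whenever $A\mid S:=\prod_{h\in G_0}h^{\ord(h)}$ has $\mathsf k(A)>1$, so that $\mathsf L(S)=\{2,r+1\}$. Applying Lemma~\ref{3.4}.1 to $\{g\}\cup E^*$ (with $E^*$ realising the minimum) identifies $d$ with $\min\{\mathsf v_g(A)>0:A\in\mathcal A(\{g\}\cup E^*)\}$, which divides $\ord(g)$.

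For Part~1, introduce $\mathcal S=\{s\in[1,\ord(g)]:sg\in\langle E\rangle\text{ for some proper }E\subsetneq G_0\setminus\{g\}\}$, so the hypothesis becomes $k\notin\mathcal S$. Pick $T\in\mathcal F(G_0\setminus\{g\})$ with $\sigma(T)=-kg$, $\mathsf v_h(T)\in[0,\ord(h)-1]$ for every $h$, and $|T|$ minimal; any element missing from $\supp(T)$ would place $k$ into $\mathcal S$, so $\supp(T)=G_0\setminus\{g\}$ and $\supp(g^kT)=G_0$. Setting $A=g^kT$, suppose a non-trivial factorisation $A=A'A''$ in $\mathcal B(G_0)$. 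If $g\nmid A'$ then $A'\in\mathcal B(G_0\setminus\{g\})$ and $T/A'$ is a strictly shorter legal choice for $T$, contradicting minimality. Otherwise both factors carry $g$; writing $T'=A'/g^{k'}$ and $T''=A''/g^{k''}$, a case analysis on whether $\supp(T')$ or $\supp(T'')$ is proper (reducing to a smaller problem in the proper case, and forcing $\mathsf v_h(T)\ge 2$ for every $h$ otherwise, from which a further reduction again contradicts minimality) shows $A$ is an atom; Lemma~\ref{4.3}.1 then forces $\mathsf k(A)>1$ via $\supp(A)=G_0$.

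Fix an atom $W=g^dU$ with $U\in\mathcal F(E^*)$; since $\supp(W)\subsetneq G_0$, Lemma~\ref{4.3}.1 gives $\mathsf k(W)=1$. For Part~3, given atoms $A_1,A_2$ with $\mathsf v_g(A_1)-\mathsf v_g(A_2)=td$, $t\ge 0$, the identification $\mathsf q(\mathcal B(G_0\setminus\{g\}))=\ker\sigma\cap\mathsf q(\mathcal F(G_0\setminus\{g\}))$ produces $X_1,X_2\in\mathcal B(G_0\setminus\{g\})$ with $A_1X_1=A_2W^tX_2$ in $\mathcal B(G_0)$. Choose such a pair with $|X_1|+|X_2|$ minimal; half-factoriality of $\mathcal B(G_0\setminus\{g\})$ renders $\mathsf k(X_i)\in\mathbb Z_{\ge 0}$ equal to the factorisation length, and the cross-number identity $\mathsf k(A_1)+\mathsf k(X_1)=\mathsf k(A_2)+t+\mathsf k(X_2)$, together with an analysis of the minimal pair exploiting $A_i\mid S$, the partner factorisation $S=A_i(SA_i^{-1})$ from Lemma~\ref{4.3}.2, and the bound $\mathsf k(A_i)<r$, collapses $\mathsf k(A_1)-\mathsf k(A_2)$ to zero. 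A corollary is that $\mathsf k(A)=1$ iff $d\mid\mathsf v_g(A)$.

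For Part~2, write $k=qd+k_0$ with $k_0\in[1,d-1]$; minimality of $d$ in $\mathcal S$ gives $k_0\notin\mathcal S$, so Part~1 supplies an atom $A_0$ with $\mathsf v_g(A_0)=k_0$ and $\mathsf k(A_0)>1$. Factor the zero-sum $A_0W^q$, which has $\mathsf v_g=k$; the Part~3 corollary guarantees a factor with $\mathsf v_g\not\equiv 0\pmod d$, and a partial-sum argument on the $g$-multiplicities locates an atom $A$ with $\mathsf v_g(A)=k$, whereupon Part~3 forces $\mathsf k(A)=\mathsf k(A_0)>1$. For the ``in particular'' clause, if $B\mid S$ with $\mathsf v_g(B)=k$ and $B$ is not an atom, factor $B$ and $SB^{-1}$ into atoms summing to a length in $\mathsf L(S)=\{2,r+1\}$; length $r+1$ forces each factor to have $\mathsf k=1$ and hence $d\mid\mathsf v_g$ for each, contradicting $d\nmid k=\mathsf v_g(B)$, so the length is $2$ and $B$ is the atom. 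The main obstacle is the collapse of the cross-number identity in Part~3 from ``difference is an integer'' to ``difference is zero'': this relies on a delicate choice of the minimal witness $(X_1,X_2)$ and on the sharp bound $\mathsf k(A_i)<r$ together with the identification $\mathsf L(S)=\{2,r+1\}$.
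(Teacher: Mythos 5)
The decisive gap is in your Part~1, at the step where you must show that the minimal-length zero-sum sequence $A=g^kT$ is an atom. Your case analysis does not work as described: if $A=A'A''$ with $g$ dividing both factors and, say, $\supp(A')\subsetneq G_0$, all you learn is that $\mathsf v_g(A')\in\mathcal S$, and this contradicts nothing — the hypothesis only says $k\notin\mathcal S$, and $k=k'+k''$ can lie outside $\mathcal S$ even when both $k'$ and $k''$ lie in it, because the two witnessing proper subsets may together cover all of $G_0\setminus\{g\}$; so there is no ``smaller problem'' to reduce to. Likewise, in the full-support case, knowing $\mathsf v_h(T)\ge 2$ for every $h$ gives no way to shorten $T$ while keeping $\sigma(T)=-kg$. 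This step is exactly where the paper invests its main effort: it first excludes factors of cross number $>1$ via $S=\prod_{h\in G_0}h^{\ord(h)}$ and Lemma~\ref{4.3}.2 (otherwise $SA_1^{-1}$ would not be an atom), so every factor $A_i$ has $\mathsf k(A_i)=1$ and hence $|\supp(A_i)|\le n/2$ by Lemma~\ref{4.3}.1, and then runs a chain argument on minimal subsets $E_i$ attached to the partial gcd's of the $\mathsf v_g(A_i)$, ending in the numerical contradiction $|G_0|\le 1+2(n/2-1)=n-1<r+1=|G_0|$. None of this counting appears in your sketch, and without it the atomicity claim is unsupported.

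There are also gaps in Parts~2 and~3. In Part~2, the ``partial-sum argument'' cannot locate an atom with $\mathsf v_g(A)=k$: in a factorization of $A_0W^q$ the $g$-multiplicities of the factors merely sum to $k$, and in general no single factor carries all of it. (The paper instead shows directly that $d\nmid k$ forces $kg\notin\langle E\rangle$ for every proper $E\subsetneq G_0\setminus\{g\}$ and then applies Part~1; alternatively one can strip subsequences $h^{\ord(h)}$ from $A_0W^q$ and invoke the ``in particular'' clause, but that is not what you wrote.) In Part~3 your single identity $A_1X_1=A_2W^tX_2$ with $X_1,X_2\in\mathcal B(G_0\setminus\{g\})$ is a sound and arguably cleaner setup than the paper's two-case induction, but you leave the decisive collapse step open, and the tools you name (a minimal witness, $\mathsf L(S)=\{2,r+1\}$) do not visibly close it. The missing ingredient is the distance argument the paper uses: the two factorizations of $A_1X_1$ have lengths $1+\mathsf k(X_1)$ and $1+t+\mathsf k(X_2)$, whose difference equals $\mathsf k(A_2)-\mathsf k(A_1)$ and must be divisible by $\min\Delta(G_0)=r-1$ (by Lemma~\ref{4.2} one has $r\ge n-1$, so $\max\Delta^*(G)=r-1$), while $1\le\mathsf k(A_i)<r$ (LCN plus Lemma~\ref{4.3}.2) bounds this difference strictly below $r-1$ in absolute value, forcing it to be zero. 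Without that step your Part~3 — and hence the corollary your Part~2 leans on — remains incomplete.
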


\begin{proof}
Note that by Lemma \ref{4.2}, we have $|G_0|=r+1$ and $r\ge n-1$. The minimality of $d$ and Lemma \ref{3.4}.1 imply that  $d \t \ord(g)$. We set $S=\prod_{g\in G_0}g^{\ord(g)}$.

\smallskip
1. Since $kg\in \langle G_0\setminus\{g\}\rangle$, there is a zero-sum sequence $A$  such that $\mathsf v_g(A)=k$, and we choose an $A$ with minimal length $|A|$.  Then $\supp(A)=G_0$ by assumption on $kg$, and we assert  that $A$ is an atom. If this holds, then $\mathsf k (A)>1$ by Lemma \ref{4.3}.1.

Assume to the contrary that $A=A_1 \cdot \ldots \cdot A_s$ with $s\ge2$ and  atoms $A_1, \ldots, A_s$. The minimality of $|A|$ implies that $\mathsf v_g (A_i)>0$ for each $i \in [1,s]$.
If there exists an $i \in [1,s]$ such that $\mathsf k(A_i)>1$, say $A_1$, then $S=A_1 \cdot \ldots \cdot A_s(SA^{-1})$ but $SA_1^{-1}=A_2 \cdot \ldots \cdot A_s(SA^{-1})$ is not an atom, a contradiction to Lemma \ref{4.3}.2. Thus, for each $i \in [1,s]$, we have  $\mathsf k(A_i)=1$ and hence $\supp(A_i)\subsetneq G_0$ by Lemma \ref{4.3}.1.

For each $i\in [1,\,s]$, we set $t_i = \mathsf v_g (A_i)$,
$d_i=\gcd( \{t_1, \ldots, t_i,\ord(g) \})$, and let  $E_i \subset G_0 \setminus \{g\}$ be minimal such that $d_ig\in \langle E_i \rangle$. Note that $k=t_1+ \ldots + k_s$.
Since $d_1g\in \langle t_1g \rangle \subset \langle \supp(A_1)\setminus\{g\} \rangle \subsetneq \langle G_0\setminus\{g\} \rangle$, it follows that $E_1\subsetneq G_0\setminus\{g\}$.
Since
$kg\in \langle d_sg \rangle \subset \langle E_s\rangle $, it follows that $E_s=G_0\setminus\{g\}$.

Let $l \in [1, s-1]$ be maximal  such that $E_l \subsetneq G_0\setminus\{g\}$.
Then $d_lg\in \langle E_l\rangle$ and $E_{l+1}=G_0\setminus\{g\}$.
Let $d_0 \in \N$ be the minimal  such that $d_0g\in E_l$. Then Lemma \ref{3.4}.1 implies  that  $d_0 \t d_l$
 and there exists an atom $W$ such that $\supp(W)=\{g\}\cup E_l$, $\mathsf v_g(W)=d_0$, and $\mathsf k(W)=1$.
Since $d_{l+1}g\in \langle d_lg, t_{l+1}g \rangle\subset  \langle E_l\cup \supp(A_{l+1})\setminus\{g\} \rangle $, we have that $ E_l\cup \supp(A_{l+1})\setminus\{g\}=G_0\setminus\{g\}$.
Then Lemma \ref{4.3}.1 implies that $|G_0|\le 1+ |E_l| + |\supp (A_{l+1}) \setminus \{g\}| \le 1 + (n/2-1)+(n/2-1) = n-1$, a contradiction.

\smallskip
2. If $kg \in \langle E_1 \rangle$ for some $E_1 \subsetneq G_0 \setminus \{g\}$, then $\gcd (d,k)g \in \langle k g \rangle \subset \langle E_1 \rangle$, whence the minimality of $d$ implies that $\gcd (d,k)=d$ and $d \t k$, a contradiction. Therefore, we obtain that $kg\not\in \langle E \rangle$ for any $E\subsetneq G_0\setminus \{g\}$. Thus 1. implies that  there is an atom $A$ with $\mathsf v_g(A)=k$ and $\mathsf k(A)>1$.

Let $B \in \mathcal B (G_0)$ with $B \t S$ and $\mathsf v_g (B)=k$. We set $B = A_1 \cdot \ldots \cdot A_s$ with $s \in \N$ and atoms $A_1, \ldots, A_s$. Then $\mathsf v_g (A_1)+ \ldots + \mathsf v_g (A_s) = \mathsf v_g (B)=k$. Since $d \nmid k$, there is an $i \in [1,s]$ with $d \nmid \mathsf v_g (A_i)$. We want to show that $\mathsf k (A_i)>1$, and assume to the contrary that $\mathsf k (A_i)=1$. Then $|\supp (A_i)|\le n/2$ by Lemma \ref{4.3}.1. Furthermore, $d' = \gcd (d, \mathsf v_g (A_i) ) < d$, but
\[
d'g \in \langle \mathsf v_g (A_i)g \rangle \subset \langle \supp (A_i) \setminus \{g\} \rangle \quad \text{and} \quad \supp (A_i) \setminus \{g\} \subsetneq G_0 \setminus \{g\} \,,
\]
a contradiction to the minimality of $d$. Therefore it follows that $\mathsf k (A_i) > 1$. Since $g \t SB^{-1}$, it follows that $S \ne B$. Since $S = A_i \big( (BA_i^{-1}) (SB^{-1}) \big)$ and $SA_i^{-1}$ is an atom by Lemma \ref{4.3}.2, it follows that $B=A_i \in \mathcal A (G_0)$.

\smallskip
 3. Let $A_1 \in \mathcal A (G_0)$. We assert that $\mathsf k (A_1)=\mathsf k (A_2)$ for all $A_2 \in \mathcal A (G_0)$  with $\mathsf v_g(A_1) \equiv \mathsf v_g(A_2)\mod d$. We distinguish two cases.

\smallskip
\noindent
CASE 1: \    $d \t \mathsf v_g(A_1)$.

There is an $A \in \mathcal A (G_0)$ with $\mathsf v_g (A)=d$ and $\mathsf k (A)=1$. It is sufficient to show that $\mathsf k (A_1)=1$. There are $l \in \N$ and  $V_1, \ldots, V_l \in \mathcal A (G_0 \setminus \{g\})$ (hence $\mathsf k (V_1)= \ldots = \mathsf k (V_l)=1$) such that
\[
A_1 A^{\frac{\ord (g)- \mathsf v_g (A_1)}{d}} = g^{\ord (g)} V_1 \cdot \ldots \cdot V_l \quad \text{hence} \quad \mathsf k (A_1) = 1+l - \frac{\ord (g)- \mathsf v_g (A_1)}{d} \,.
\]
Furthermore, $\min \Delta (G_0)=r-1$ divides
\[
(l+1) - \Big( 1 + \frac{\ord (g)- \mathsf v_g (A_1)}{d} \Big) = \mathsf k (A_1)-1 \,.
\]
Since $\mathsf k (A_1)< r$ by Lemma \ref{4.3}, it follows that $\mathsf k (A_1)=1$.

\smallskip
\noindent
CASE 2: \  $d \nmid \mathsf v_g(A_1)$.

Let $d_0 \in [1,d-1]$ such that $\mathsf v_g (A_1) \equiv d_0 \mod d$. By 2., there are atoms $B_l$ such that $\mathsf v_g (B_l) = d_0 + ld$ for all $l \in \N_0$ with $d_0 + ld < \ord (g)$. Thus by an inductive argument it is sufficient to prove the assertion for those atoms $A_2$ with $\mathsf v_g(A_2)=\mathsf v_g(A_1)$ and with $\mathsf v_g(A_2)=\mathsf v_g(A_1)+d$.

Suppose that  $\mathsf v_g(A_1)=\mathsf v_g(A_2)$. By 2., there is an atom $V$ such that $\mathsf v_g(V)=\ord(g)-\mathsf v_g(A_1)$. Then there are $l \in \N$ and $V_1, \ldots, V_l \in \mathcal A (G_0 \setminus \{g\})$ such that $A_1V = g^{\ord (g)} V_1 \cdot \ldots \cdot V_l$ and hence $\mathsf k (A_1)+\mathsf k (V) = 1 + \sum_{i=1}^l \mathsf k (V_i) = l+1$. Since $\min \Delta (G_0) = r-1$ divides $l-1$, it follows that  either $l=r$ or $l \ge 2r-1$. If $l \ge 2r-1$, then $\mathsf k (A_1) \ge r$ or $\mathsf k (V) \ge r$, a contradiction to Lemma \ref{4.3}.
Therefore $\mathsf k(A_1)+\mathsf k(V)= r+1 = \mathsf k(A_2)+\mathsf k(V)$ and hence $\mathsf k(A_1)=\mathsf k(A_2)$.

Suppose that  $\mathsf v_g(A_1)=\mathsf v_g(A_2)+d$. Let $E \subsetneq G_0 \setminus \{g\}$ such that $dg \in \langle E \rangle$. Then there is an $A \in \mathcal A (E \cup \{g\})$  with $\mathsf v_g(A)=d$, and clearly $\mathsf k (A)=1$.  Let $V_1, \ldots,V_t$ be all the atoms with $V_{\nu} \t A_2A$ and $|\supp(V_{\nu})|=1$ for all $\nu \in[1,\,t]$. Since $\mathsf v_g (A_2A) = \mathsf v_g (A_1) < \ord (g)$, it follows that $B = A_2 A (V_1 \cdot \ldots \cdot V_t)^{-1}$ divides $S$ and that $\mathsf v_g (B)= \mathsf v_g (A_1)$. Therefore 2. implies that $B$ is an atom, and by Step 1 we obtain that $\mathsf k (B) = \mathsf k (A_1)$. If $t \ge 2$, then $A_2A=BV_1 \cdot \ldots \cdot V_t$ implies $t \ge 1+ \min \Delta (G_0) =r$, and thus $\mathsf k (A_2) \ge r$, a contradiction to Lemma \ref{4.3}. Therefore we obtain that $t=1$ and thus $\mathsf k (A_2)+1= \mathsf k (B)+1 = \mathsf k (A_1)+1$.
\end{proof}

\medskip
\begin{theorem} \label{4.5}
Let $G$ be a finite abelian group  with $\exp(G)=n$, $\mathsf r (G)=r$, and let $G_0 \subset G$ be a minimal non-half-factorial set with $\min  \Delta (G_0)=\max \Delta^* (G)$.
\begin{enumerate}
\item If $r < n-1$, then there exists $g\in G$ with $\ord (g)=n$ such that $G_0=\{g,\: -g\}$.

\smallskip
\item Let $r=n-1$. If $G_0$ is not an {\rm LCN}-set, then there exists $g\in G$ with $\ord (g)=n$ such that $G_0=\{g,\: -g\}$. If $G_0$ is  an {\rm LCN}-set, then $|G_0|=r+1$  and for each $h\in G_0$,  $h\not\in \langle G_0\setminus \{h,\ h'\}\rangle $ for any $h'\in G_0\setminus \{h\}$.

\smallskip
\item If $r\ge n $, then $G_0$ is an {\rm LCN}-set with $|G_0|=r+1$ and for each $h\in G_0$,  $h\not\in \langle G_0\setminus \{h,\ h'\}\rangle $ for any $h'\in G_0\setminus \{h\}$.

\smallskip
\item If  $r \ge n-1$, $G_0$ is an {\rm LCN}-set, and $n$ is odd, then there exists an element $g\in G_0$ such that $G_0\setminus \{g\} $ is independent.
\end{enumerate}
\end{theorem}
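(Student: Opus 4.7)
I plan to handle Parts (1)--(3) by combining Theorem \ref{1.1}, Lemma \ref{3.1}, Lemma \ref{4.1}, and Lemma \ref{4.2} via the LCN vs.\ non-LCN dichotomy for $G_0$. For Part (1), $r < n-1$ forces $\max\Delta^*(G) = n-2$ and hence $\min\Delta(G_0) = n-2$; if $G_0$ were an LCN-set, Lemma \ref{4.2}.1 would demand $r \ge n-1$, a contradiction, so Lemma \ref{4.1}.2 applies (its hypothesis $r \le n-1$ is met) and yields $G_0 = \{g,-g\}$ with $\ord(g) = n$. Part (2) is immediate from Lemmas \ref{4.1}.2 and \ref{4.2}.1 in the non-LCN and LCN branches respectively, with $r = n-1$. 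For Part (3), $r \ge n$ yields $\max\Delta^*(G) = r-1 > n-2$; if $G_0$ were not LCN, Lemma \ref{3.1}.3 would give $\min\Delta(G_0)\le n-2 < r-1$, contradicting our assumption, so $G_0$ is LCN and Lemma \ref{4.2}.1 delivers $|G_0| = r+1$ with the minimality condition.

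For Part (4) the preceding parts already supply $|G_0| = r+1$ together with the condition that no $h \in G_0$ lies in $\langle G_0 \setminus \{h, h'\}\rangle$ for any $h' \in G_0 \setminus \{h\}$. Iterating Lemma \ref{3.4}.3, and using its transfer property (d), I may replace $G_0$ by a set $G_0^*$ of the same cardinality, with the same factorization invariants and the same minimality condition, but in which additionally every $g \in G_0^*$ lies in $\langle G_0^* \setminus \{g\}\rangle$; since (d) transports independence backwards, it suffices to find a $g$ making $G_0^* \setminus \{g\}$ independent, so I write $G_0$ in place of $G_0^*$ from now on. I plan to argue by contradiction: suppose that $G_0 \setminus \{g\}$ is dependent for \emph{every} $g \in G_0$. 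Equivalently, for each $g$ the set $\mathcal A(G_0 \setminus \{g\})$ contains an atom $W_g$ with $|\supp(W_g)| \ge 2$, since otherwise every atom of $\mathcal B(G_0 \setminus \{g\})$ would be of the form $h^{\ord(h)}$. With $S = \prod_{h\in G_0} h^{\ord(h)}$, the fact that $g^{\ord(g)}$ is a proper zero-sum factor of $SW_g^{-1}$ rules out $\mathsf k(W_g) > 1$ via Lemma \ref{4.3}.2, so $\mathsf k(W_g) = 1$, and Lemma \ref{4.3}.1 together with the hypothesis that $n$ is odd sharpens the support bound to $|\supp(W_g)| \le (n-1)/2$.

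The remaining work is to promote these atoms $W_g$ to a contradiction. I plan to exploit the fine cross-number structure of Lemma \ref{4.4}: for each $g$, the cross-number profile of atoms of $\mathcal B(G_0)$ factors through $\mathsf v_g \bmod d_g$, and the factorizations $S = A \cdot (SA^{-1})$ coming from $\mathsf L(S) = \{2,r+1\}$ (guaranteed by $\min \Delta(G_0) = r-1$) yield the symmetry $\mathsf k(A) + \mathsf k(SA^{-1}) = r+1$ on this profile, with $\mathsf k(A) = 1$ precisely when $d_g \mid \mathsf v_g(A)$. The main obstacle---and the point at which the hypothesis ``$n$ odd'' is decisive---is to combine this profile arithmetic for varying $g$ with the small-support constraint $|\supp(W_g)| \le (n-1)/2$ and the minimality condition so as to either exhibit a generating subset of $\langle G_0\rangle$ of size at most $|G_0|-2$ (which would trigger Lemma \ref{3.6} and give the bound $\min\Delta(G_0) \le \max\{1,n-4\}$, contradicting $\min\Delta(G_0) = r-1 \ge n-2$) or to produce an atom incompatible with the profile symmetry. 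I expect that the cleanest route isolates one distinguished $g \in G_0$---for instance, an element of order exactly $n$, or one maximising $d_g$---and verifies independence of $G_0 \setminus \{g\}$ directly; the parity of $n$ is decisive because the bound $|\supp(W_g)|\le n/2$ becomes strictly smaller than $n/2$, providing the slack needed to close a counting inequality that is otherwise tight.
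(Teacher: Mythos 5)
Parts (1)--(3) of your proposal are correct and follow the paper's own route (the LCN/non-LCN dichotomy combined with Theorem \ref{1.1}, Lemma \ref{3.1}.3, Lemma \ref{4.1}.2 and Lemma \ref{4.2}.1), and your reduction at the start of Part (4) via Lemma \ref{3.4}.3 (Properties (a) and (d)) is also the paper's. The genuine gap is in Part (4): after setting up the contradiction hypothesis and observing that each $G_0\setminus\{g\}$ carries an atom of small cross number and small support, you never carry out the argument -- you only announce a plan (``promote these atoms to a contradiction'', ``I expect the cleanest route \ldots''). The actual proof requires a concrete construction that is entirely missing from your sketch: one takes $d\ge 2$ minimal such that $dg\in\langle E\rangle$ for some $g\in G_0$ and some proper subset $E\subsetneq G_0\setminus\{g\}$, notes $d\mid\ord(g)$ so that $n$ odd forces $d\ge 3$, chooses via Lemma \ref{4.4}.2 atoms $A_1,\ldots,A_{d-1}$ with $\mathsf v_g(A_i)=i$, $\mathsf k(A_i)>1$, and $\mathsf v_h(A_i)$ minimal (where $h$ comes from an atom $U\in\mathcal A(G_0\setminus\{g\})$ with $\mathsf v_h(U)\le\ord(h)/2$), proves the key assertion $\mathsf v_h(A_i)<\mathsf v_h(U)$ by an exchange argument with Lemma \ref{4.4}.2/3, and then derives a contradiction from the minimizer $j$ of $\mathsf k(A_j)$ by factoring $A_1A_{j-1}$ (if $j\ge2$) or $A_2A_{d-1}$ (if $j=1$), using $\min\Delta(G_0)=r-1$ to force exactly one singleton-support atom to split off and Lemma \ref{4.4}.3 to identify the resulting cross numbers. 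None of this appears in your proposal.

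Two further points. First, your diagnosis of where ``$n$ odd'' enters is not the actual mechanism: the decisive use is $d\mid\ord(g)\mid n$ with $n$ odd, which rules out $d=2$ and guarantees $d-1\ge 2$ distinct indices, without which the case analysis above cannot even be formulated; the sharpening $|\supp(W_g)|\le(n-1)/2$ plays no role of the kind you describe. Second, the alternative branch you propose -- exhibiting a generating subset of $\langle G_0\rangle$ of size at most $|G_0|-2$ so as to invoke Lemma \ref{3.6} -- is a dead end: by Lemma \ref{4.2}.1 (which you have already invoked) every $h\in G_0$ satisfies $h\notin\langle G_0\setminus\{h,h'\}\rangle$, so every subset of cardinality $|G_0|-1$ is a \emph{minimal} generating set and no such small generating subset can exist. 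So Part (4) stands as a genuine gap: the setup is right, but the core combinatorial argument and the correct use of the parity hypothesis are missing.
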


\begin{proof}
1. Suppose that $r < n-1$.  Then Lemma \ref{4.2} implies that $G_0$ is not an LCN-set. Thus   Lemma \ref{4.1}.2 implies that $G_0$ has the asserted form.

\smallskip
2. If  $G_0$ is not an {\rm LCN}-set, then the assertion follows from  Lemma \ref{4.1}.2. If $G_0$ is  an LCN-set, then the assertion follows from Lemma \ref{4.2}.1.

\smallskip
3. Suppose that $r\ge n$. Then Theorem \ref{1.1} implies that $\min \Delta (G_0)=\max \Delta^* (G)=r-1$.  Thus Lemma \ref{3.1}.3.(a) imply that $G_0$ is an LCN-set. Hence the assertion follows from Lemma \ref{4.2}.1.

\smallskip
4. Let $r \ge n-1$, $G_0$ be an {\rm LCN}-set, and suppose that $n$ is odd.
By Lemma \ref{3.4}.3 (Properties (a) and (d)), we may suppose without restriction that $g\in \langle G_0\setminus\{g\} \rangle$ for each $g \in G_0$. Lemma \ref{4.2} implies that $|G_0|=r+1$ and that for each $g \in G_0$ we have $g\not\in \langle E \rangle$ for any $E\subsetneq G_0\setminus \{g\}$.

Assume to the contrary that $G_0 \setminus \{h\}$ is dependent for each $h \in G_0$. Then there exist $g\in G_0$,  $d\in [2,\, \ord(g)-1]$, and $E\subsetneq G_0\setminus \{g\}$ such that  $dg\in \langle E \rangle$. Now let $d \in \N$  be  minimal over all  configurations $(g,E,d)$, and fix $g, E$ belonging to $d$. It follows that we have an atom $A$ with $\supp(A)\subsetneq G_0$ and $\mathsf v_g(A)=d$. By Lemma \ref{4.4}, we obtain that $d \t \ord (g)$, and hence $d\ge 3$ because $n$ is odd.

Since $G_0\setminus\{g\}$ is dependent, there  exist atoms $U' \in \mathcal A (G_0 \setminus \{g\})$ with $|\supp(U')|>1$. Thus, by Lemma \ref{3.4}.1, there exist an $U \in \mathcal A (G_0 \setminus \{g\})$ and an $h\in \supp(U)$ such that $\mathsf v_h(U)\le \frac{\ord(h)}{2}$ and $\mathsf v_h(U)\t \ord(h)$.

\vskip 2mm
By Lemma \ref{4.4}.2, there are atoms $A_1, \ldots,A_{d-1}$  with $\mathsf v_g(A_i)=i$ and $\mathsf k(A_i)>1$ for each $i\in [1,\,d-1]$, and we choose each $A_i$ in such a way that  $\mathsf v_h(A_i)$ is minimal. We continue with the following assertion.

\begin{enumerate}
\item[{\bf A.}\,] For each $i \in [1, d-1]$, we have $\mathsf v_h(A_i)< \mathsf v_h(U)\le \frac{\ord(h)}{2}$.
\end{enumerate}

\smallskip
{\it Proof of \,{\bf A}}.\
Assume to the contrary that there is an $i \in [1, d-1]$ such that $\mathsf v_h (A_i) \ge \mathsf v_h (U)$. Then
\[
h \notin F = \{h' \in \supp (U) \mid \mathsf v_{h'} (A_i) < \mathsf v_{h'} (U) \} \quad \text{and} \quad
U \, \big| \,A_i \prod_{h'\in F} {h'}^{\ord (h')} \,.
\]
Hence $A_i \prod_{h'\in F} {h'}^{\ord (h')} = U B_i$ for some zero-sum sequence $B_i$. By Lemma \ref{4.4} (items 2. and 3.),  $B_i$ is an atom with $i=\mathsf v_g(A_i)=\mathsf v_g(B_i)$ and with $\mathsf k (B_i)=\mathsf k (A_i)>1$. Since $\mathsf v_h(A_i)>\mathsf v_h(B_i)$, this is a  contradiction to the choice of $A_i$. \qed{(Proof of {\bf A})}

\medskip
Let $j \in [1, d-1]$ be such that  $\mathsf k(A_j)=\min \{ \mathsf k (A_1), \ldots, \mathsf k (A_{d-1}) \}$.

Suppose that $j\ge 2$. Let $V_1, \ldots,V_t$ be all the atoms with $V_s \t A_1A_{j-1}$ and $|\supp(V_s)|=1 $ for all $s\in [1,\,t]$. Then $B=A_1A_{j-1}(V_1  \cdot \ldots \cdot V_t)^{-1}$ is an atom by Lemma \ref{4.4}.2.
Since $\mathsf v_g(A_1A_{j-1})=j<\ord(g)$, $\mathsf v_h(A_1A_{j-1})<\ord(h)$, and $\mathsf v_f (A_1A_{j-1}) < 2 \ord (f)$ for all $f \in G_0 \setminus \{g,h\}$, it follows that $t\le |G_0|-2 \le  r-1$. Since $\min \Delta(G_0)=r-1$ and  $A_1A_{j-1}=V_1 \cdot \ldots \cdot V_t B$, we must have $t=1$. Therefore $\mathsf k(A_1)+\mathsf k(A_{j-1})=1+\mathsf k(B)$ whence $\mathsf k (B) < \mathsf k (A_{j-1})$. Since
\[
\mathsf v_g(B)= \mathsf v_g (V_1B)=\mathsf v_g (A_1 A_{j-1})=j=\mathsf v_g (A_j) \,,
\]
Lemma \ref{4.4}.3 implies that  $\mathsf k(B)=\mathsf k(A_j)=\min \{ \mathsf k (A_1), \ldots, \mathsf k (A_{d-1}) \} $, a contradiction.

Suppose that $j=1$. Let $V_1, \ldots, V_t$ be all the atoms with $V_s \t A_2A_{d-1}$ and $|\supp(V_s)|=1 $ for all $s\in [1,\,t]$. Then $B=A_2A_{d-1}(V_1  \cdot \ldots \cdot V_t)^{-1}$ is an atom by Lemma \ref{4.4}.2.
Since $\mathsf v_g(A_2A_{d-1})=d+1<\ord(g)$, $\mathsf v_h(A_2A_{d-1})<\ord(h)$, and $\mathsf v_f (A_1A_{j-1}) < 2 \ord (f)$ for all $f \in G_0 \setminus \{g,h\}$, it follows that $t \le |G_0|-2 \le  r-1$. Since $\min \Delta(G_0)=r-1$ and  $A_2A_{d-1}=V_1 \cdot \ldots \cdot V_t B$, we must have $t=1$. Therefore $\mathsf k(A_2)+\mathsf k(A_{d-1})=1+\mathsf k(B)$ whence $\mathsf k (B) < \mathsf k (A_{2})$. Since
\[
\mathsf v_g(B)= \mathsf v_g (V_1B)= \mathsf v_g (A_2A_{d-1}) = d+1 \equiv 1 = \mathsf v_g (A_1) \mod d \,,
\]
Lemma \ref{4.4}.3 implies that  $\mathsf k(B)=\mathsf k(A_1)=\min \{ \mathsf k (A_1), \ldots, \mathsf k (A_{d-1}) \} $, a contradiction.
\end{proof}

\medskip
In the following remark we provide the first example of a minimal non-half-factorial subset $G_0$ with $\min \Delta (G_0)=\max \Delta^* (G)$ which is not simple. Furthermore, we provide an example that the structural statement given in Theorem \ref{4.5}.4 does not hold without the assumption that the exponent is odd.

\medskip
\begin{remarks} \label{4.6}~
Following Schmid, we say that a nonempty subset $G_0 \subset G \setminus \{0\}$ is  {\it simple} if there exists some $g \in G_0$ such that $G_0 \setminus \{g\}$ is independent, $g \in \langle G_0 \setminus \{g\} \rangle$ but $g \notin \langle E \rangle$ for any subset $E \subsetneq G_0 \setminus \{g\}$.

If $G_0$ is a simple subset, then $|G_0| \le \mathsf r^* (G)+1$ and $G_0$ is indecomposable. Moreover, if $G_1 \subset G$ is a subset such that any proper subset of $G_1$ is independent, then there is a subset $G_0$ and a transfer homomorphism $\theta \colon \mathcal B (G_1) \to \mathcal B (G_0)$ where $G_0 \setminus \{0\}$ is simple or independent (for all this see \cite[Section 4]{Sc04a}). Furthermore, Theorem 4.7 in \cite{Sc04a} provides an intrinsic description of the sets of atoms  of a simple set.

In elementary $p$-groups, every minimal non-half-factorial subset is simple (\cite[Lemma 4.4]{Sc04a}), and so far there are no examples of minimal non-half-factorial sets $G_0$ with $\min \Delta (G_0) = \max \Delta^* (G)$ which are not simple.

\smallskip
1. Let $G=C_9^{r-1}\oplus C_{27}$ with $r\ge 26$, and let $(e_1,\ldots,e_r)$ be a basis of $G$ with $\ord (e_i)=9$ for $i\in [1,\,r-1]$ and $\ord(e_r)=27$. Then $\max \Delta^* (G)=r-1$ by Theorem \ref{1.1}.
We set $G_0=\{3e_1,\ldots,3e_{r-1},e_r,g\}$ with $g=e_1 +\ldots+e_r$. Then $(e_r, g)$ is not independent,  $G_0 \setminus \{g\}$ and $G_0 \setminus \{e_r\}$ are independent, but $g \notin \langle G_0 \setminus \{g\}\rangle$ and $e_r \notin \langle G_0 \setminus \{e_r\}\rangle$.  Therefore $G_0$ is not simple. It remains to show that $\min \Delta (G_0) \ge r-1$. Then $G_0$ is minimal non-half-factorial and $\min \Delta (G_0) = r-1$ because $\max \Delta^* (G) = r-1$.

We have
\begin{align*}
W_1=\{A\in \mathcal A(G_0)\mid \mathsf k(A)=1\}=\{&(3e_1)^3, \ldots, (3e_{r-1})^3,e_r^{27},g^{27},g^9e_r^{18}, g^{18}e_r^9\},\\  W_2=\{A\in \mathcal A(G_0)\mid \mathsf k(A)>1\}=\{&A_3=g^3e_r^{24}(3e_1)^2\cdot \ldots \cdot (3e_{r-1})^2, A_6=g^6e_r^{21}(3e_1)\cdot \ldots \cdot (3e_{r-1}),\\
  & A_{12}=g^{12}e_r^{15}(3e_1)^2\cdot \ldots \cdot (3e_{r-1})^2, A_{15}=g^{15}e_r^{12}(3e_1)\cdot \ldots \cdot (3e_{r-1}), \\   &A_{21}=g^{21}e_r^6(3e_1)^2\cdot \ldots \cdot (3e_{r-1})^2, A_{24}=g^{24}e_r^3(3e_1)\cdot \ldots \cdot (3e_{r-1})\}
\end{align*}
and $\mathsf k(A_3)=\mathsf k(A_{12})=\mathsf k(A_{21})=(2r+1)/3$, $\mathsf k(A_6)=\mathsf k(A_{15})=\mathsf k(A_{24})=(r+2)/3$.
For any $d\in \Delta (G_0)$, there exists a $B \in \mathcal B (G_0)$ such that $B$ has two such factorizations, say
 \[
    B=U_1 \cdot \ldots \cdot U_s V_1 \cdot \ldots \cdot V_t  W_1 \cdot \ldots \cdot W_u=X_1 \cdot \ldots \cdot X_{s'} Y_1 \cdot \ldots \cdot Y_{t'}  Z_1 \cdot \ldots \cdot Z_{u'}\,
 \]
where all $U_i, V_j, W_k, X_{i'}, Y_{j'}, Z_{k'}$ are atoms, $s, t, u, s',t',u' \in \N_0$ with $d=(s+t+u)-(s'+t'+u')$, $\mathsf k (U_1)=\ldots=\mathsf k (U_s)=\mathsf k (X_1)= \ldots = \mathsf k (X_{s'})=\frac{2r+1}{3}$,  $\mathsf k (V_1)= \ldots = \mathsf k (V_t)=\mathsf k (Y_1) = \ldots = \mathsf k (Y_{t'})= (r+2)/2$, and $\mathsf k (W_1)=\ldots=\mathsf k (W_u)=\mathsf k (Z_1)= \ldots = \mathsf k (Z_{u'})=1$. This implies that
 \[
 \mathsf k(B)=s(\frac{2r+1}{3})+t(\frac{r+2}{3})+u=s'(\frac{2r+1}{3})+t'(\frac{r+2}{3})+u'
 \]
and $\mathsf v_{3e_1} (B)\equiv 2s+t\equiv 2s'+t' \mod 3$.
Since $d=(s+t+u)-(s'+t'+u')=\frac{r-1}{3}((t'-t)+2(s'-s))>0$, we obtain that  $(t'-t)+2(s'-s)\ge 3$ and hence $d\ge r-1$.

\medskip
2. We provide an example of a minimal non-half-factorial LCN-set with $\min \Delta (G_0) = \max \Delta^* (G)$ in a group $G$ of even exponent which has no element $g \in G_0$ such that $ G_0 \setminus \{g\}$ is independent. In particular, $G_0$ is not simple and the assumption in Theorem \ref{4.5}.4, that the exponent of the group is odd, cannot be cancelled.

Let $G=C_2^{r-2}\oplus C_4\oplus C_4$ with $r\ge3$, and let $(e_1, \ldots, e_r)$ be a basis of $G$ with $\ord (e_i)=2$ for $i\in [1,\,r-2]$ and $\ord(e_{r-1})=\ord(e_r)=4$.
We set $G_0=\{e_1, \ldots,e_{r-3},e_{r-2}+e_{r-1},e_{r-1},e_r,g\}$ with $g=e_1+ \ldots+e_{r-2}+e_r$. Since $(e_{r-2}+e_{r-1},e_{r-1})$ is dependent and $(e_r,g)$ is dependent, we obtain that there is no $h\in G_0$ such that $G_0\setminus\{h\}$ is independent. We have
\begin{align*}
W_1=\{A\in \mathcal A(G_0)\mid \mathsf k(A)=1\}=\{&(e_1)^2, \ldots, (e_{r-3})^2,(e_{r-2}+e_{r-1})^{4},(e_{r-1})^4,e_r^4,g^4,\\
&(e_{r-2}+e_{r-1})^{2}(e_{r-1})^2,g^2e_r^2\},\\
 W_2=\{A\in \mathcal A(G_0)\mid \mathsf k(A)>1\}=\{&A_1=ge_r^3(e_{r-2}+e_{r-1})e_{r-1}^3e_1\cdot \ldots \cdot e_{r-3},\\
 & B_1=ge_r^3(e_{r-2}+e_{r-1})^3e_{r-1}e_1\cdot \ldots \cdot e_{r-3},\\
  &A_3=g^3e_r(e_{r-2}+e_{r-1})e_{r-1}^3e_1\cdot \ldots \cdot e_{r-3},\\
  & B_3=g^3e_r(e_{r-2}+e_{r-1})^3e_{r-1}e_1\cdot \ldots \cdot e_{r-3}\}
\end{align*}
and $\mathsf k(A_1)=\mathsf k(A_3)=\mathsf k(B_1)=\mathsf k(B_3)=(r+1)/2$.  Theorem \ref{1.1} implies that $\max \Delta^* (G) = r-1$, and thus it remains to show that  $\min \Delta (G_0)  = r-1$.

For any $d\in \Delta (G_0)$, there exists a $B \in \mathcal B (G_0)$ such that $B$ has two such factorizations, say
\[
B = U_1 \cdot \ldots \cdot U_sV_1 \cdot \ldots \cdot V_t = X_1 \cdot \ldots \cdot X_uY_1 \cdot \ldots \cdot Y_v \,
\]
where all $U_i, V_j, X_k, Y_l$ are atoms, $s, t, u, v \in \N_0$ with $d=u+v-(s+t)$, $\mathsf k (U_1)=\ldots=\mathsf k (U_s)=\mathsf k (X_1)= \ldots = \mathsf k (X_u)=1$, and $\mathsf k (V_1)= \ldots = \mathsf k (V_t)=\mathsf k (Y_1) = \ldots = \mathsf k (Y_v)= (r+1)/2$. This implies that
\[
\mathsf k (B) = s + t \frac{r+1}{2} = u + v \frac{r+1}{2}
\]
and $\mathsf v_{g}(B)\equiv t \equiv v \mod 2$. Since $d=(v+u) - (s+t) = (t-v) \frac{r-1}{2}>0$, we obtain that $t-v \ge 2$ and hence $d\ge r-1$.
\end{remarks}


\providecommand{\bysame}{\leavevmode\hbox to3em{\hrulefill}\thinspace}
\providecommand{\MR}{\relax\ifhmode\unskip\space\fi MR }
\providecommand{\MRhref}[2]{%
  \href{http://www.ams.org/mathscinet-getitem?mr=#1}{#2}
}
\providecommand{\href}[2]{#2}


\begin{thebibliography}{10}

\bibitem{Ba-Ge14b}
N.R. Baeth and A.~Geroldinger, \emph{Monoids of modules and arithmetic of
  direct-sum decompositions}, Pacific J. Math. \textbf{271} (2014), 257 -- 319.

\bibitem{Ba-Sm15}
N.R. Baeth and D.~Smertnig, \emph{Factorization theory: {F}rom commutative to
  noncommutative settings}, J. Algebra, to appear.

\bibitem{Ba-Wi13a}
N.R. Baeth and R.~Wiegand, \emph{Factorization theory and decomposition of
  modules}, Am. Math. Mon. \textbf{120} (2013), 3 -- 34.

\bibitem{Ch11a}
Gyu~Whan Chang, \emph{Every divisor class of {K}rull monoid domains contains a
  prime ideal}, J. Algebra \textbf{336} (2011), 370 -- 377.

\bibitem{Ch-Ch-Sm07b}
S.~Chang, S.T. Chapman, and W.W. Smith, \emph{On minimum delta set values in
  block monoids over cyclic groups}, Ramanujan J. \textbf{14} (2007), 155 --
  171.

\bibitem{Ch-Go-Pe14a}
S.T. Chapman, F.~Gotti, and R.~Pelayo, \emph{On delta sets and their realizable
  subsets in {K}rull monoids with cyclic class groups}, Colloq. Math.
  \textbf{137} (2014), 137 -- 146.

\bibitem{Ch-Sc-Sm08b}
S.T. Chapman, W.A. Schmid, and W.W. Smith, \emph{On minimal distances in
  {K}rull monoids with infinite class group}, Bull. Lond. Math. Soc.
  \textbf{40} (2008), 613 -- 618.

\bibitem{G-L-P-P-W11a}
W.~Gao, Y.~Li, J.~Peng, C.~Plyley, and G.~Wang, \emph{On the index of sequences
  over cyclic groups}, Acta Arith. \textbf{148} (2011), 119 -- 134.

\bibitem{Ga-Pe-Zh13a}
W.~Gao, J.~Peng, and Q.~Zhong, \emph{A quantitative aspect of non-unique
  factorizations: the {N}arkiewicz constants {III}}, Acta Arith. \textbf{158}
  (2013), 271 -- 285.

\bibitem{Ge09a}
A.~Geroldinger, \emph{Additive group theory and non-unique factorizations},
  Combinatorial {N}umber {T}heory and {A}dditive {G}roup {T}heory
  (A.~Geroldinger and I.~Ruzsa, eds.), Advanced Courses in Mathematics CRM
  Barcelona, Birkh{\"a}user, 2009, pp.~1 -- 86.

\bibitem{Ge-Go03}
A.~Geroldinger and R.~G{\"{o}}bel, \emph{Half-factorial subsets in infinite
  abelian groups}, Houston J. Math. \textbf{29} (2003), 841 -- 858.

\bibitem{Ge-HK04a}
A.~Geroldinger and F.~Halter-Koch, \emph{Congruence monoids}, Acta Arith.
  \textbf{112} (2004), 263 -- 296.

\bibitem{Ge-HK06a}
\bysame, \emph{Non-{U}nique {F}actorizations. {A}lgebraic, {C}ombinatorial and
  {A}nalytic {T}heory}, Pure and Applied Mathematics, vol. 278, Chapman \&
  Hall/CRC, 2006.

\bibitem{Ge-Ha02}
A.~Geroldinger and {Y. ould} Hamidoune, \emph{Zero-sumfree sequences in cyclic
  groups and some arithmetical application}, J. Th{\'e}or. Nombres Bordx.
  \textbf{14} (2002), 221 -- 239.

\bibitem{Ge-Ka-Re15a}
A.~Geroldinger, F.~Kainrath, and A.~Reinhart, \emph{Arithmetic of seminormal
  weakly {K}rull monoids and domains}, J. Algebra, to appear.

\bibitem{Ge-Sc16a}
A.~Geroldinger and W.~A. Schmid, \emph{A characterization of class groups via
  sets of lengths}, \href{http://arxiv.org/abs/1503.04679}{arXiv:1503.04679}.

\bibitem{Ge-Sc15b}
\bysame, \emph{The system of sets of lengths in {K}rull monoids under set
  addition}, Revista Matematica Iberoamericana, to appear,
  \href{http://arxiv.org/abs/1407.1967v2}{arXiv:1407.1967v2}.

\bibitem{Ge-Yu12b}
A.~Geroldinger and P.~Yuan, \emph{The set of distances in {K}rull monoids},
  Bull. Lond. Math. Soc. \textbf{44} (2012), 1203 –-- 1208.

\bibitem{Gi08b}
B.~Girard, \emph{Inverse zero-sum problems and algebraic invariants}, Acta
  Arith. \textbf{135} (2008), 231 -- 246.

\bibitem{Gr13a}
D.J. Grynkiewicz, \emph{Structural {A}dditive {T}heory}, Developments in
  Mathematics, Springer, 2013.

\bibitem{Ha11a}
{Y. ould} Hamidoune, \emph{A structure theory for small sum subsets}, Acta
  Arith. \textbf{147} (2011), 303 -- 327.

\bibitem{Ki-Pa01}
H.~Kim and Y.~S. Park, \emph{{K}rull domains of generalized power series}, J.
  Algebra \textbf{237} (2001), 292 -- 301.

\bibitem{Pl-Sc16a}
A.~Plagne and W.A. Schmid, \emph{On congruence half-factorial {K}rull monoids
  with cyclic class group}, submitted.

\bibitem{Pl-Sc05a}
\bysame, \emph{On large half-factorial sets in elementary $p$-groups{\rm \,:}
  maximal cardinality and structural characterization}, Isr. J. Math.
  \textbf{145} (2005), 285 -- 310.

\bibitem{Pl-Sc05b}
\bysame, \emph{On the maximal cardinality of half-factorial sets in cyclic
  groups}, Math. Ann. \textbf{333} (2005), 759 -- 785.

\bibitem{Sc04a}
W.A. Schmid, \emph{Arithmetic of block monoids}, Math. Slovaca \textbf{54}
  (2004), 503 -- 526.

\bibitem{Sc05d}
\bysame, \emph{Differences in sets of lengths of {K}rull monoids with finite
  class group}, J. Th{\'e}or. Nombres Bordx. \textbf{17} (2005), 323 -- 345.

\bibitem{Sc05c}
\bysame, \emph{Half-factorial sets in finite abelian groups{\rm \,:} a survey},
  Grazer Math. Ber. \textbf{348} (2005), 41 -- 64.

\bibitem{Sc06a}
\bysame, \emph{Half-factorial sets in elementary $p$-groups}, Far East J. Math.
  Sci. \textbf{22} (2006), 75 -- 114.

\bibitem{Sc08d}
\bysame, \emph{Periods of sets of lengths{\rm \,:} a quantitative result and an
  associated inverse problem}, Colloq. Math. \textbf{113} (2008), 33 -- 53.

\bibitem{Sc09c}
\bysame, \emph{Arithmetical characterization of class groups of the form
  $\mathbb{Z} /n \mathbb{Z} \oplus \mathbb{Z} /n \mathbb{Z}$ via the system of
  sets of lengths}, Abh. Math. Semin. Univ. Hamb. \textbf{79} (2009), 25 -- 35.

\bibitem{Sc09a}
\bysame, \emph{A realization theorem for sets of lengths}, J. Number Theory
  \textbf{129} (2009), 990 -- 999.

\bibitem{Sm13a}
D.~Smertnig, \emph{Sets of lengths in maximal orders in central simple
  algebras}, J. Algebra \textbf{390} (2013), 1 -- 43.

\bibitem{Ze-Yu11a}
X.~Zeng and P.~Yuan, \emph{Two conjectures on an addition theorem}, Acta Arith.
  \textbf{148} (2011), 395 -- 411.

\end{thebibliography}
\end{document}